\definecolor{rouge}{rgb}{0.85,0.1,.4}
\definecolor{blue}{rgb}{0.1,0.2,0.9}
\definecolor{violet}{rgb}{0.7,0,0.8}
\newcommand{\cprime}{$'$}
\newcommand{\on}{\operatorname}
\newcommand{\mc}{\mathcal}
\newcommand{\al}{\alpha}
\def\be{\beta}
\def\leq{\leqslant}
\def\geq{\geqslant}
\def\C{{\mathbb C}}
\def\Q{{\mathbb Q}}
\def\Z{{\mathbb Z}}
\def\1{{\bf 1}}
\def \End{{\rm End}}
\def \dep{{\rm depth}\,}
\def \<{\langle}
\def \>{\rangle}
\def \g{\frak g}
\def \fg{\widehat{\frak{g}}}
\def\gr{{\rm gr}V^{k}({\frak g})}
\def \h{\mathfrak{h}}
\def \W{\mathscr{W}}
\numberwithin{equation}{section}
\newtheorem{theorem}{Theorem}[section]
\newtheorem{prop}[theorem]{Proposition}
\newtheorem{lem}[theorem]{Lemma}
\theoremstyle{definition}
\newtheorem{defn}[theorem]{Definition}
\begin{document}
  \title{Simplicity of vacuum modules and associated varieties}

\author{Tomoyuki Arakawa}\thanks{Arakawa is supported by 
partially  supported 
by JSPS KAKENHI Grant Number
No.\ 17H01086 and No.\ 17K18724. }
\address[Arakawa]{ Research Institute for Mathematical Sciences, Kyoto University, Kyoto, 606-8502, Japan}
\email{arakawa@kurims.kyoto-u.ac.jp}

\author{Cuipo  Jiang} \thanks{Jiang is supported by CNSF grants 11771281 and  11531004}
\address[Jiang]{School of Mathematical Sciences,  Shanghai Jiao Tong University, Shanghai, 200240, China}
\email{cpjiang@sjtu.edu.cn}

\author{Anne Moreau}\thanks{Moreau is supported by the ANR Project GeoLie Grant number
ANR-15-CE40-0012, and by the Labex CEMPI (ANR-11-LABX-0007-01)}
\address[Moreau]{Univ. Lille, CNRS, UMR 8524 - Laboratoire Paul Painleve,
F-59000 Lille, France}
\email{anne.moreau@univ-lille.fr}
\subjclass[2010]{17B69}
\keywords{associated variety, affine Kac-Moody algebra,
 affine vertex algebra, singular vector, affine $W$-algebra}

\maketitle

\begin{abstract}
In this note, we prove that the universal affine vertex algebra associated with
a  simple Lie algebra $\g$ is simple if and only if the associated
variety of its unique simple quotient is equal to $\g^*$.
We also derive an analogous result for the quantized
Drinfeld-Sokolov reduction applied to the universal affine vertex algebra.
 \end{abstract}

\section{Introduction}
Let $V$ be a vertex algebra,
and let
$$V\longrightarrow (\End V)[[z,z^{-1}]],\quad a\longmapsto a(z)=\sum_{n\in \Z}a_{(n)}z^{-n-1},
$$be the state-field correspondence.
The {\em Zhu $C_2$-algebra} \cite{Zhu96} of $V$ is by definition the quotient space
$R_V=V/C_2(V)$,
where $C_2(V)=\on{span}_{\C}\{a_{(-2)}b\mid a,b\in V\}$,
equipped with the Poisson algebra structure given by
$$\bar a. \bar b=\overline{a_{(-1)}b},\qquad \{\bar a,\bar b\}=\overline{a_{(0)}b},$$
for $a,b \in V$ with $\bar a := a+C_2(V)$.
The associated variety $X_V$ of $V$ is the reduced scheme
$X_V=\on{Specm}(R_V)$
corresponding to $R_V$.
It is a fundamental invariant of $V$ that captures 
important properties
of the vertex algebra $V$ itself (see, for example, \cite{BeiFeiMaz,Zhu96,AbeBuhDon04,Miy04,Ara12,Ara09b,A2012Dec,AM15,AM16,Arakawa-Kawasetsu}).
Moreover,
it conjecturally  \cite{BeeRas} coincides with
the Higgs branch of
a 4D $\mc{N}=2$ supercoformal field theory $\mc{T}$
that is a hyperk\"{a}hler cone,
if $V$ corresponds to $\mc{T}$
by the 4D/2D duality discovered in \cite{BeeLemLie15}.

In the case that $V$ is
the universal affine vertex algebra $V^k(\g)$ at level $k \in\C$
associated with a complex finite-dimensional simple Lie algebra $\g$,
the variety $X_V$  is just the affine space $\g^*$ with Kirillov-Kostant Poisson structure.
In the case that $V$ is
the unique simple graded quotient $L_k(\g)$
of $V^k(\g)$,
the variety $X_V$ is a Poisson subscheme of $\g^*$
which is $G$-invariant and conic, where $G$ is the adjoint group of $\g$.

Note that if the level $k$ is irrational,
then
 $L_k(\g)=V^k(\g)$,
and hence $X_{L_k(\g)}=\g^*$.
More generally, if $L_k(\g)=V^k(\g)$, that is, $V^k(\g)$
is simple, then obviously
\hbox{$X_{L_k(\g)}=\g^*$.}


In this note, we prove that the converse is true.

\begin{theorem}\label{MainTheorem}
\label{Th:main}
The equality $L_k(\g)=V^k(\g)$ holds, that is,
$V^k(\g)$
is simple, if and only if $X_{L_k(\g)} = \g^*$.
\end{theorem}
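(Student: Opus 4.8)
Let $N\subseteq V^k(\g)$ be the unique maximal graded submodule, so that $L_k(\g)=V^k(\g)/N$; the implication ``$V^k(\g)$ simple $\Rightarrow X_{L_k(\g)}=\g^*$'' has already been noted, so only the converse needs an argument. The first step I would carry out is to rephrase the hypothesis $X_{L_k(\g)}=\g^*$ in terms of $C_2$. From $L_k(\g)=V^k(\g)/N$ one gets $C_2(L_k(\g))=(C_2(V^k(\g))+N)/N$, hence $R_{L_k(\g)}\cong R_{V^k(\g)}/\overline N=\C[\g^*]/\overline N$, where $\overline N$ is the image of $N$, a Poisson ideal of $\C[\g^*]$. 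Since $\C[\g^*]$ is reduced, the vanishing locus of $\overline N$ is all of $\g^*$ if and only if $\overline N=0$, that is, if and only if $N\subseteq C_2(V^k(\g))$. Thus the theorem reduces to the implication ``$N\subseteq C_2(V^k(\g))\Rightarrow N=0$''.

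Next I would pass to Li's filtration $F^\bullet V^k(\g)$. One has $C_2(V^k(\g))=F^1V^k(\g)$, and $\on{gr}^FV^k(\g)$ is, as a Poisson vertex algebra, the coordinate ring $\C[J_\infty(\g^*)]$ of the infinite jet scheme of $\g^*$, graded with degree-$0$ part $R_{V^k(\g)}=\C[\g^*]$ and with $\lambda$-bracket determined by $\{x_\lambda y\}=[x,y]+\lambda(x\mid y)$ for $x,y\in\g$. If $N\ne0$ then $\on{gr}^FN=\bigoplus_p\sigma_p(N)$ is a nonzero graded Poisson vertex algebra ideal of $\C[J_\infty(\g^*)]$, and the hypothesis $N\subseteq F^1V^k(\g)$ says exactly that its degree-$0$ component vanishes. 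So the theorem follows once one proves: \emph{a graded Poisson vertex algebra ideal $I$ of $\C[J_\infty(\g^*)]$ with $I\cap\C[\g^*]=0$ is zero.}

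To prove this, assume $I\ne0$ and let $d\ge1$ be minimal with $I_d\ne0$; fix $0\ne a\in I_d$. For each $x\in\g$ the operation $b\mapsto x_{(1)}b$ preserves $I$, and bookkeeping the degrees in the Wick formula shows that its degree-$(d-1)$ component is $\delta_x(a)$, where $\delta_x$ is the derivation of $\C[J_\infty(\g^*)]$ determined by $\delta_x(y^{(n)})=[x,y]^{(n-1)}$ for $n\ge1$ and $\delta_x(y^{(0)})=0$ (here $y^{(n)}$ is the $n$-th jet coordinate attached to $y\in\g$, with $[x,y]^{(0)}=[x,y]\in\g\subseteq\C[\g^*]$). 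Since $I$ is graded, $\delta_x(a)\in I_{d-1}=0$, so $a\in\bigcap_{x\in\g}\ker\delta_x$; as $a$ has positive degree this contradicts the identity $\bigcap_{x\in\g}\ker\delta_x=\C[\g^*]$. That identity would be proved by induction on symmetric degree: each $\delta_x$ is bihomogeneous for the bigrading of $\C[J_\infty(\g^*)]$ by (symmetric degree, jet degree) and lowers jet degree by $1$; in symmetric degree $\le1$ it is the assertion $\bigcap_{x\in\g}\ker(\on{ad}(x))=Z(\g)=0$; and in higher symmetric degree, if $a$ is killed by all $\delta_x$ and $s$ is the top jet level occurring in $a$, then each $\partial a/\partial y_i^{(s)}$ is again killed by all $\delta_x$ and has smaller symmetric degree, so by induction $s=d$, whereupon isolating the jet-level-$(d-1)$ part of the equations $\delta_x(a)=0$ produces a vector killed by every $\on{ad}(x)$, hence zero, forcing $a=0$.

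The whole weight of the proof is in this last claim; Steps 1--2 are formal. Within the claim, the two points that require care are: (i) verifying that the degree-$(d-1)$ component of $x_{(1)}(\cdot)$ is the \emph{derivation} $\delta_x$ — i.e. that the correction term in the Wick formula and the central term of the $\lambda$-bracket contribute only in jet degrees $\ge d-1$, never exactly $d-1$ when applied to a product; and (ii) arranging the bidegree induction so that triviality of the centre of $\g$ is invoked only after a single jet level has been isolated.
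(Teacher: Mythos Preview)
Your reduction in the first two steps is correct, but the claim you isolate as carrying ``the whole weight of the proof'' --- that every nonzero graded Poisson vertex ideal of $\C[J_\infty\g^*]$ meets $\C[\g^*]$, equivalently that $\bigcap_{x\in\g}\ker\delta_x=\C[\g^*]$ --- is \emph{false}. Take any homogeneous generator $p\in S(\g)^{\g}$ of the invariant ring (e.g.\ the Casimir); then $Tp$ has depth~$1$ but lies in $\bigcap_{x\in\g}\ker\delta_x$. Indeed, from $[\delta_x,T]=x_{(0)}$ one gets $\delta_x(Tp)=T(\delta_x p)+x_{(0)}p=0+\{x,p\}=0$. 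More generally all $T^jp_i$ are $\g[t]$-invariant, and the differential ideal they generate for $j\ge1$ is a nonzero Poisson vertex ideal with vanishing depth-$0$ part. (Incidentally, in $\on{gr}^FV^k(\g)$ the $\lambda$-bracket is $\{x_\lambda y\}=[x,y]$ with \emph{no} central term $\lambda(x\mid y)$: the central extension is filtered out by the Li filtration. This is precisely why the level disappears and your purely commutative argument cannot distinguish critical from non-critical $k$.)

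The flaw in your induction is the last sentence: the ``vector killed by every $\on{ad}(x)$'' that you extract lives in $S^r(\g)$ for some $r\ge2$, and $(S^r\g)^{\g}\ne0$ once $r\ge2$; only for $r=1$ does $Z(\g)=0$ give the conclusion you want. For $a=Tp_1$ your procedure produces exactly the Casimir in $S^2(\g)$, which is invariant but nonzero. This obstruction is intrinsic: the paper points out explicitly (see the Introduction and \S\ref{subsection:remark}) that one \emph{cannot} prove Theorem~\ref{theorem:image-of-the-singular} at the level of $\on{gr}^FV^k(\g)$, and that at the critical level $\on{gr}^FN_{-h^\vee}$ is precisely the augmentation ideal of $\C[J_\infty\g^*]^{J_\infty G}$. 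The actual proof therefore stays in $V^k(\g)$ and exploits the Sugawara operator $L_{-1}=\tilde L_{-1}$ on singular vectors, a tool available only when $k\ne-h^\vee$; the critical case is handled separately by the Feigin--Frenkel/Frenkel--Gaitsgory description of the center.
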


It is known by  Gorelik and Kac \cite{GorKac07}
that $V^k(\g)$
is not simple if and only if
\begin{align}
r^{\vee}(k+h^{\vee}) \in \Q_{\geq 0}\backslash \left\{\frac{1}{m}\mid m\in \Z_{\geq 1}\right\},
\label{eq:Gorelik-Kac}
\end{align}
where
$h^{\vee}$ is the dual Coxeter number and
$r^{\vee}$ is the lacing number of $\g$.
Therefore,
Theorem \ref{MainTheorem} can be rephrased as
\begin{align}
X_{L_k(\g)}\subsetneq \g^*\iff \text{\eqref{eq:Gorelik-Kac} holds.}
\end{align}

Let us mention the cases when the variety $X_{L_k(\g)}$
is known
for $k$ satisfying \eqref{eq:Gorelik-Kac}.

First, it is known \cite{Zhu96,DonMas06} that $X_{L_k(\g)}=\{0\}$
 if and only if $L_k(\g)$ is integrable, that is,
 $k$ is a nonnegative integer.
Next, it is known that if $L_k(\g)$ is {\em admissible} \cite{KacWak89},
or equivalently,
if
\begin{align*}
k+h^{\vee}=\frac{p}{q},\quad p,q\in \Z_{\geq 1}, \ (p,q)=1,\
p\geq \begin{cases}h^{\vee}&\text{if }(r^{\vee},q)=1,\\
h&\text{if }(r^{\vee},q)\ne 1,
\end{cases}
\end{align*}
where $h$ is the Coxeter number of $\g$,
 then
 $X_{L_k(\g)}$ is the closure of some nilpotent orbit in $\g$ (\cite{Ara09b}).
 Further, it was observed in  \cite{AM15,AM17}
 that
there are   
 cases when $L_k(\g)$ is
non-admissible 
and
$X_{L_k(\g)}$ is the closure of some nilpotent orbit.
 In fact,
 it was recently
conjectured in physics \cite{XieYan2019lisse} that,
 in view of the 4D/2D duality,
 there should be
 a large list of non-admissible simple affine vertex algebras
 whose associated varieties are the closures of some nilpotent orbits.
Finally,
there are also cases \cite{AM16} where $X_{L_k(\g)}$
 is neither $\g^*$ nor contained in the nilpotent cone $\mathcal{N}(\g)$ 
 of $\g$.

 In general, it is wide open to determine the variety $X_{L_k(\g)}$.

\smallskip


Now let us explain
the outline of
 the proof of Theorem~\ref{Th:main}.
 First, Theorem~\ref{Th:main} is known for the critical level $k=-h^\vee$
(\cite{FeiFre92,FreGai04}). Therefore Theorem~\ref{Th:main} follows from the following fact.
 \begin{theorem}\label{theorem:image-of-the-singular}
 Suppose that the level is non-critical, that is, $k\ne -h^\vee$.
The image of any nonzero singular vector  $v$  of $V^k(\g)$
in the Zhu $C_2$-algebra $R_{V^k(\g)}$ is nonzero.
 \end{theorem}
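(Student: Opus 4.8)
The plan is to work with the PBW/grading filtration on $V^k(\g)$ and analyze the leading term of a singular vector with respect to it. Recall that $R_{V^k(\g)}\cong \C[\g^*]=S(\g)$ as a Poisson algebra, and more precisely $\on{gr} V^k(\g)\cong S(\g\ot t^{-1}\C[t^{-1}])$, the coordinate ring of the arc space $J_\infty\g^*$; the projection $V^k(\g)\to R_{V^k(\g)}$ corresponds to setting all the jet coordinates $x_{(-n)}$ with $n\geq 2$ (i.e.\ $x_{(-n)}=x_{(n-1)}$ in the field notation, for $x\in\g$) equal to zero. So Theorem~\ref{theorem:image-of-the-singular} is equivalent to the assertion: a nonzero singular vector $v$ cannot lie in $C_2(V^k(\g))$, equivalently its symbol in $\on{gr}V^k(\g)$ cannot be divisible, monomial-by-monomial, by some $x_{(-n)}$ with $n\geq 2$ after passing to the appropriate associated graded. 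Here is where the noncritical hypothesis must enter.

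Concretely, suppose for contradiction that $v\neq 0$ is singular (so $\n_+\ot t^0$ and $\g\ot t\C[t]$ annihilate it, and it is a weight vector for $\h$, of some conformal weight $d$) and that $\bar v=0$ in $R_{V^k(\g)}$. The first step is to understand what "$\bar v=0$" forces on the symbol $\sigma(v)\in S^{d}(\g\ot t^{-1}\C[t^{-1}])_{d}$ (the degree-$d$, conformal-weight-$d$ component): every monomial appearing in $\sigma(v)$ must contain at least one factor $x_{(-n)}$ with $n\geq 2$. The second, main step is to derive a contradiction by applying lowering operators from $\fg$. The key structural input is that $V^k(\g)$ with $k\neq -h^\vee$ carries a conformal vector (Sugawara), hence an action of $L_0$ and, crucially, of $L_{-1}=T$, the translation operator; and $T$ acts on $\on{gr}V^k(\g)=\C[J_\infty\g^*]$ as the canonical derivation $\partial$ of the arc space (sending $x_{(-n)}\mapsto n\,x_{(-n-1)}$), which does not change anything about the "is divisible by a jet coordinate of order $\geq 2$" property — so $T$ alone is not enough. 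Instead I would use the $\fg$-action: a singular vector $v$ generates a highest-weight $\fg$-submodule, and inside it one can reach the highest-weight vector of the corresponding Verma-type quotient; the idea is that repeated application of suitable $f_{i,(n)}$, $n\leq 0$, combined with the PBW normal ordering, produces from $\sigma(v)$ a monomial supported only on $\g\ot t^{-1}$, i.e.\ a nonzero element of $S(\g)=R_{V^k(\g)}$, contradicting $\bar v=0$. Making this precise is the crux: one needs that the "bottom" of the $\fg$-module generated by $v$ meets $R_{V^k(\g)}$ nontrivially, and that this bottom is nonzero precisely because $k$ is noncritical (at the critical level the center is huge and the argument genuinely fails, consistently with the theorem's hypothesis).

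An alternative, perhaps cleaner route that I expect to be the actual one: use the known description of singular vectors. Any singular vector $v$ of conformal weight $d>0$ in $V^k(\g)$ satisfies $v\in U(\g\ot t^{-1}\C[t^{-1}])\1$, and because $e_{\theta,(1)}$ (for the highest root $\theta$) and $e_{i,(0)}$ annihilate it, one gets strong constraints; combined with the fact that $h:=[e_\theta\ot t^{-1},\,f_\theta\ot t^1]$-type relations and the Sugawara $L_{-1}$ intertwine the grading, one shows that if every monomial of $\sigma(v)$ used a factor of order $\geq 2$, then applying $L_{-1}$ a controlled number of times and using that $v$ is singular would force $v$ itself to be a total derivative $Tw$ for some singular $w$ of lower weight — and iterating, $v=0$. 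The noncritical hypothesis is exactly what makes $L_{-1}=T$ available and makes the operator $T$ on $V^k(\g)$ injective in positive conformal weight (equivalently, $\C\1$ is the kernel of $T$), closing the induction.

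The main obstacle I anticipate is the second step: controlling the interaction between the $\fg$-lowering operators (or the translation operator $T$) and the PBW normal-ordering when passing to $\on{gr}V^k(\g)$, i.e.\ showing that the leading symbol does not unexpectedly cancel, and pinpointing exactly where $k\neq -h^\vee$ is used — presumably through non-degeneracy of the relevant bilinear form / the $L_0$-eigenvalue of $v$ being positive and the Sugawara construction being defined. Everything else (the identification $R_{V^k(\g)}\cong S(\g)$, $\on{gr}V^k(\g)\cong \C[J_\infty\g^*]$, the characterization of $C_2$) is standard and may be quoted.
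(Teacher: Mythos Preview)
Your proposal correctly identifies the framework (Li/PBW filtration, $\on{gr}V^k(\g)\cong S(t^{-1}\g[t^{-1}])$, Sugawara as the source of the non-critical hypothesis), but neither mechanism you propose actually works. The first route, applying $\fg$-lowering operators $f_{i,(n)}$ to reach a monomial supported on $\g\otimes t^{-1}$, goes in the wrong direction: lowering operators neither preserve singularity nor decrease depth in any usable way. The second route, deducing $v=Tw$ with $w$ singular and iterating down to $v=0$, does not close an induction: even granting $\bar v=0\Rightarrow v=Tw$, there is no reason $\bar w=0$ (indeed $\overline{Tw}=0$ always in $R_V$), so the conformal-weight descent gives no contradiction. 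More fundamentally, the paper emphasizes that the purely commutative analogue is \emph{false}: the elements $\partial^j p_i\in S(t^{-1}\g[t^{-1}])$ (for $p_i$ a Casimir) are $\g[t]$-singular with zero image in $S(\g)$. So any valid argument must exploit that $\sigma(v)$ is the symbol of a singular vector in the noncommutative $V^k(\g)$; your sketch does not isolate where that is used.

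The paper's argument, which you half-anticipate but do not execute, rests on the identity $L_{-1}w=\tilde L_{-1}w$ valid for singular $w$ when $k\ne -h^\vee$, with
\[
\tilde L_{-1}=\frac{1}{k+h^\vee}\Bigl(\sum_i u^i(-1)u^i(0)+\sum_{\alpha\in\Delta_+}e_\alpha(-1)f_\alpha(0)\Bigr).
\]
One first proves, purely in the associated graded and using the raising operators $e_\alpha(m)$, that among the PBW monomials of $\sigma(w)$ with maximal ``Cartan-at-$t^{-1}$'' degree the $f_\beta$-part is trivial. Then one compares, monomial by monomial, the two expressions $L_{-1}w$ (which via $[L_{-1},x(m)]=-mx(m-1)$ shifts each factor transparently) and $\tilde L_{-1}w$; tracking the invariants $\deg_{-1}^{(0)}$ and $\deg(\,\cdot\,)^{(+)}$ through several technical lemmas forces the extremal monomials to have depth $0$, hence $\bar w\ne 0$. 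This comparison, not a descent $v=Tw$, is the substance of the proof.
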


 The symbol $\sigma(w)$
of a  singular vector  $w$ in $V^k(\g)$ is
a 
singular vector
 in the corresponding vertex Poisson algebra
${\rm gr}\, V^k(\g) \cong S(t^{-1}\g[t^{-1}])\cong \C[J_{\infty}\g^*]$,
where $J_\infty \g^*$ is the arc space of $\g^*$.
Theorem \ref{theorem:image-of-the-singular}
states that
the image of $\sigma(w)$
of a non-trivial singular vector $w$
under the natural projection
\begin{align}
\C[J_{\infty}\g^*]\longrightarrow \C[\g^*]=R_{V^k(\g)}
\label{eq:proj}
\end{align}
 is nonzero,
provided that $k$ is non-critical.
Hence,
Theorem \ref{theorem:image-of-the-singular}
would follow if
the image of any nontrivial singular vector
in $\C[J_{\infty}\g^*]$
under the projection \eqref{eq:proj}
is nonzero.
However, this is false (see Subsection \ref{subsection:remark}).
Therefore,
we do need to make use of the fact that
$\sigma(w)$ is the symbol of a  singular vector $w$ in $V^k(\g)$.
We also note that
the statement of Theorem \ref{theorem:image-of-the-singular}
is not true if $k$ is critical (see Subsection~\ref{subsection:remark}).

For this reason the proof of Theorem \ref{theorem:image-of-the-singular}
is  divided roughly into two parts.
First, we work in the commutative setting
to deduce
 a first important reduction (Lemma~\ref{lem:wi_-}).
Next, we use the Sugawara construction --
which is available only at non-critical levels -- 
in the non-commutative setting
in order to complete the proof.

\smallskip
Now, let us consider the {\em $W$-algebra} $\W^k(\g,f)$ associated with
a nilpotent element $f$ of $\g$ at the level $k$
defined by the generalized quantized Drinfeld-Sokolov reduction
\cite{FeiFre90,KacRoaWak03}:
$$\W^k(\g,f)=H^{0}_{DS,f}(V^k(\g)).$$
Here, $H^{\bullet}_{DS,f}(M)$ denotes the BRST
cohomology of the  generalized quantized Drinfeld-Sokolov reduction
associated with $f \in \mathcal{N}(\g)$ with coefficients in
a $V^k(\g)$-module~$M$.

By the Jacobson-Morosov theorem, $f$ embeds into an $\mathfrak{sl}_2$-triple
$(e,h,f)$. The  Slodowy slice $\mathscr{S}_{f}$ at $f$ is
 the affine
space $\mathscr{S}_{f}=f+\g^{e}$, where $\g^{e}$
is the centralizer of $e$  in $\g$.
It has a natural Poisson structure induced from that of $\g^*$ (see \cite{GanGin02}),
and we have \cite{DSK06,Ara09b} a natural isomorphism
$R_{\W^k(\g,f)}\cong \C[\mathscr{S}_{f}]$ of Poisson algebras, so that
\begin{align*}
 X_{\W^k(\g,f)}= \mathscr{S}_{f}.
\end{align*}
The natural surjection
$V^k(\g)\twoheadrightarrow L_k(\g)$
induces a surjection
$\W^k(\g,f)\twoheadrightarrow H_{DS,f}^0(L_k(\g))$
of vertex algebras (\cite{Ara09b}).
Hence the variety
$X_{H^{0}_{DS,f}(L_k(\g))}$
is a $\C^*$-invariant Poisson
subvarieties of the Slodowy slice $\mathscr{S}_f$.

Conjecturally  \cite{KacRoaWak03,KacWak08},
the vertex algebra
$H^{0}_{DS,f}(L_k(\g))$ coincides
the unique simple (graded) quotient $\W_k(\g,f)$
of  $\W^k(\g,f)$
provided that
\hbox{$H^{0}_{DS,f}(L_k(\g))\ne 0$.}
(This conjecture has been verified in many cases \cite{Ara05,Ara07,Ara08-a,AEkeren19}.)

As a consequence of Theorem \ref{Th:main}, we obtain the following
result.

\begin{theorem}\label{Th:main2}
Let $f$ be any nilpotent element of $\g$.
The following assertions are equivalent:
\begin{enumerate}
\item $V^k(\g)$ is simple,
\item $\W^k(\g,f)=H^{0}_{DS,f}(L_k(\g))$,
\item $X_{H^{0}_{DS,f}(L_k(\g))} = \mathscr{S}_f$.
\end{enumerate}
%
%
\end{theorem}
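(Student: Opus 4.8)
The plan is to deduce Theorem~\ref{Th:main2} from Theorem~\ref{Th:main} by tracking the associated variety through the Drinfeld--Sokolov functor. The equivalence $(1)\iff(2)$ already contains $(1)\iff(3)$ via the isomorphism $R_{\W^k(\g,f)}\cong \C[\mathscr S_f]$ and the surjection $\W^k(\g,f)\twoheadrightarrow H^0_{DS,f}(L_k(\g))$ recalled above: indeed $X_{H^0_{DS,f}(L_k(\g))}$ is always a closed subvariety of $\mathscr S_f=X_{\W^k(\g,f)}$, and this surjection is an isomorphism of vertex algebras if and only if the induced surjection on Zhu $C_2$-algebras $\C[\mathscr S_f]=R_{\W^k(\g,f)}\twoheadrightarrow R_{H^0_{DS,f}(L_k(\g))}$ is an isomorphism, which (the target being a reduced ring, or by a standard filtration argument in which a surjection of vertex algebras inducing an isomorphism of $C_2$-algebras is an isomorphism) is equivalent to $X_{H^0_{DS,f}(L_k(\g))}=\mathscr S_f$. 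So the real content is $(1)\iff(2)$, and $(1)\Rightarrow(2)$ is immediate since $V^k(\g)=L_k(\g)$ forces $H^0_{DS,f}(V^k(\g))=H^0_{DS,f}(L_k(\g))$, i.e.\ $\W^k(\g,f)=H^0_{DS,f}(L_k(\g))$.

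First I would prove $(3)\Rightarrow(1)$ by contraposition. Suppose $V^k(\g)$ is not simple, so by Theorem~\ref{Th:main} we have $X_{L_k(\g)}\subsetneq \g^*$; equivalently $L_k(\g)$ is a proper quotient and the kernel of $V^k(\g)\twoheadrightarrow L_k(\g)$ is generated by a nonzero singular vector. I want to show $X_{H^0_{DS,f}(L_k(\g))}\subsetneq \mathscr S_f$. The key geometric input is the compatibility of the DS reduction with associated varieties: for any quotient vertex algebra $V^k(\g)\twoheadrightarrow M$ one has (as in \cite{Ara09b}) a closed embedding $X_{H^0_{DS,f}(M)}\hookrightarrow X_M\cap \mathscr S_f$, where $X_M\cap\mathscr S_f$ is the Slodowy slice intersection inside $\g^*$. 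Applying this to $M=L_k(\g)$ gives $X_{H^0_{DS,f}(L_k(\g))}\subseteq X_{L_k(\g)}\cap\mathscr S_f$. Since $X_{L_k(\g)}$ is a $G$-invariant proper conic Poisson subscheme of $\g^*$, it is contained in the nilpotent cone $\mathcal N(\g)$ (here I would invoke: a proper $G$-invariant conic closed subvariety of $\g^*\cong\g$ meets every Slodowy slice $\mathscr S_{f'}$ in a proper subvariety when $f'$ is of maximal nilpotent type among those not in it; more robustly, one uses the classification-free fact that $G$-invariant proper Poisson subvarieties of $\g^*$ are unions of nilpotent orbit closures together with the finiteness of the set of such, as used in \cite{AM16}) --- hence $X_{L_k(\g)}\cap\mathscr S_f$ is a proper subvariety of $\mathscr S_f=f+\g^e$, because $\mathscr S_f$ is transverse to $G\cdot f$ and has positive-dimensional pieces outside $\mathcal N(\g)$. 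Therefore $X_{H^0_{DS,f}(L_k(\g))}\subsetneq\mathscr S_f$, proving $(3)\Rightarrow(1)$.

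Finally $(1)\Rightarrow(3)$: if $V^k(\g)$ is simple then $\W^k(\g,f)=H^0_{DS,f}(L_k(\g))$ by $(1)\Rightarrow(2)$, whence $X_{H^0_{DS,f}(L_k(\g))}=X_{\W^k(\g,f)}=\mathscr S_f$. This closes the cycle $(1)\Rightarrow(2)\Rightarrow(3)\Rightarrow(1)$, with $(2)\Rightarrow(3)$ being the trivial direction (equal vertex algebras have equal associated varieties).

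The main obstacle I anticipate is the step asserting that $X_{L_k(\g)}\cap\mathscr S_f\subsetneq\mathscr S_f$ whenever $X_{L_k(\g)}\subsetneq\g^*$, uniformly over \emph{all} nilpotent $f$ --- including $f=0$, where $\mathscr S_0=\g^*$ and the statement is just Theorem~\ref{Th:main} itself, but also for regular nilpotent $f$, where $\mathscr S_f$ is one-dimensional per simple factor and one must rule out that the proper subvariety $X_{L_k(\g)}$ still swallows the whole slice. The cleanest route is: $X_{L_k(\g)}$ proper, $G$-invariant, conic and Poisson implies $X_{L_k(\g)}\subseteq\mathcal N(\g)$ (this is precisely where properness plus the Poisson/conic structure is used, cf.\ the dichotomy recalled in the introduction between $\g^*$, $\mathcal N(\g)$-contained, and the exceptional \cite{AM16} cases --- note those exceptional cases are still proper and still contained in $\mathcal N(\g)$... wait, \cite{AM16} says ``neither $\g^*$ nor contained in $\mathcal N(\g)$'', so I must instead argue directly), and then the key point becomes that a $G$-invariant proper conic \emph{Poisson} subscheme of $\g^*$ cannot contain $\mathscr S_f$ for any $f$, because $\mathscr S_f$ meets the open regular locus $\g^*_{\mathrm{reg}}$ (all Slodowy slices do), while a proper $G$-invariant Poisson subvariety misses the regular semisimple locus and, being Poisson hence a union of symplectic leaves, cannot contain a slice transverse to a leaf of strictly smaller codimension. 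Making this leaf-theoretic transversality argument precise --- ideally via the fact that $\dim(X_{L_k(\g)}\cap\mathscr S_f)=\dim X_{L_k(\g)}-\dim G\cdot f$ when the intersection is taken in the stratified sense --- is the technical heart of the proof; everything else is bookkeeping with the reduction functor and the $C_2$-algebra.
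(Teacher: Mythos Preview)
Your overall cycle $(1)\Rightarrow(2)\Rightarrow(3)\Rightarrow(1)$ matches the paper's, and $(1)\Rightarrow(2)$, $(2)\Rightarrow(3)$ are indeed trivial. The content lies entirely in $(3)\Rightarrow(1)$, and there your argument has a genuine gap; the first-paragraph shortcut $(3)\Rightarrow(2)$ is also unjustified, though ultimately unnecessary.

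On the shortcut: the claim that ``a surjection of vertex algebras inducing an isomorphism of $C_2$-algebras is an isomorphism'' is \emph{not} a standard fact. For the surjection $V^k(\g)\twoheadrightarrow L_k(\g)$ this is exactly the nontrivial content of Theorem~\ref{Th:main}: the entire Section~\ref{sec:main_proof} is devoted to showing that the kernel cannot lie in $C_2(V^k(\g))$. You would need the analogous statement for $\W^k(\g,f)$, which is not available a priori. Reducedness of $\C[\mathscr{S}_f]$ only tells you that the map of $C_2$-algebras is an isomorphism, not that the vertex-algebra map is.

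On the key step $(3)\Rightarrow(1)$: you correctly reduce to showing that a proper $G$-invariant closed conic subvariety $Z=X_{L_k(\g)}\subsetneq\g^*$ cannot contain $\mathscr{S}_f$. But both of your proposed arguments fail. The inclusion $Z\subseteq\mathcal N(\g)$ is false in general, as you yourself note from \cite{AM16}. The fallback via the regular locus is also insufficient: a proper $G$-invariant conic closed subvariety of $\g$ \emph{can} contain regular semisimple elements---for $\g$ of rank $\geq 2$, take the zero locus of any single homogeneous generator of $\C[\g]^G$. So ``$\mathscr{S}_f$ meets $\g_{\mathrm{reg}}$'' yields no contradiction, and your leaf-theoretic dimension count is never made precise.

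The paper closes this gap with a one-line application of Proposition~\ref{pro:Slodowy}: the action map $G\times\mathscr{S}_f\to\g$ has dense image. Hence if $\mathscr{S}_f\subseteq Z$ with $Z$ closed and $G$-invariant, then $G\cdot\mathscr{S}_f\subseteq Z$, so $Z=\g$. This density of $G\cdot\mathscr{S}_f$ is the single missing ingredient; once you invoke it, no Poisson or transversality reasoning is needed at all.
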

Note that Theorem \ref{Th:main2} implies that
 $V^k(\g)$  is simple
if
$X_{\W_k(\g,f)}=\mathscr{S}_f$
and $H^{0}_{DS,f}(L_k(\g))\ne 0$
since
$  X_{H_{DS,f}^0(L_k(\g))}\supset X_{\W^k(\g,f)}$.

\smallskip

The remainder of the paper is structured as follows.
In Section \ref{sec:affine_vertex_algebras} we set up notation
in the case of affine vertex algebras that  will be the framework of this note.
Section \ref{sec:main_proof} is devoted to the proof of Theorem~\ref{Th:main}.
In Section \ref{sec:W-algebras}, we have compiled some known facts
on Slodowy slices, $W$-algebras and their associated varieties.
Theorem~\ref{Th:main2} is proven in this section.

\subsection*{Acknowledgements} 
T.A.~and A.M.~like to thank warmly Shanghai Jiao Tong University 
for its hospitality during their stay in September, 2019.

\section{Universal affine vertex algebras and associated
graded vertex Poisson algebras}
\label{sec:affine_vertex_algebras}
Let
$\fg$ be the affine Kac-Moody algebra associated with $\g$, that is,
\begin{align*}
\fg =\g [t,t^{-1}]\oplus  \C K,
\end{align*}
where the commutation relations are given by
\begin{align*}
[x\otimes t^m,y\otimes t^n]=[x,y]\otimes t^{m+n}+m(x|y)\delta_{m+n,0}K,\quad
[K,\fg]=0,
\end{align*}
for $x,y\in \g$ and $m,n\in \Z$.
Here,
$$(~|~)=\displaystyle{\frac{1}{2h^\vee}\times} \text{ Killing form of }\g$$
is the usual normalized inner product.
For $x \in \g$ and $m \in \Z$, we shall write $x(m)$ for $x
\otimes t^m$.



\subsection{Universal affine vertex algebras}
\label{sec:Universal affine vertex algebras}
For $k \in \C$, set
\begin{align*}
V^k(\g)=U(\fg)\otimes _{U(\g [t]\oplus  \C K)}\C_k,
\end{align*}
where $\C_k$ is the one-dimensional representation of $\g [t]\oplus  \C K$
on which $K$ acts as multiplication by $k$ and $\g\otimes  \C[t]$ acts trivially.

By the Poincar\'{e}-Birkhoff-Witt Theorem, the direct sum decomposition,
we have
\begin{align}
V^k(\g)\cong U(\g \otimes t^{-1}\C[t^{-1}]) = U(t^{-1} \g[t^{-1}]).
\label{eq:PBW}
\end{align}

The space $V^k(\g)$ is naturally graded,
\begin{align*}
V^k(\g) =\bigoplus_{\Delta\in \Z_{\geq 0}}V^k(\g) _{\Delta},
\end{align*}
where the grading is defined by
$$\deg (x^{i_1}(-n_1)\ldots x^{i_r}(-n_r) {\bf 1}) =  \sum_{i=1}^r n_i,
\quad r \geq 0, \; x^{i_j} \in \g,
$$
with ${\bf 1}$ the image of $1\otimes 1$ in $V^k(\g)$.
We have $V^k(\g)_0=\C{\bf 1} $,
and we identify $\g$ with $V^k(\g)_1$ via the linear isomorphism
defined by $x\mapsto x(-1){\bf 1} $.

It is well-known that $V^k(\g)$ has a unique vertex algebra structure
such that ${\bf 1}$ is the vacuum vector,
$$x(z) := Y(x\otimes t^{-1},z)  =\sum\limits_{n \in \Z} x(n) z^{-n-1},$$
and
\begin{align*}
[T,x(z)]=\partial_z x(z)
\end{align*}
for $x \in \g$,
where $T$ is the translation operator.
Here, $x(n)$ acts on $V^k(\g)$ by left multiplication, and
so, one can view $x(n)$ as an endomorphism of $V^k(\g)$.
The vertex algebra
$V^k(\g)$ is called the {\em universal affine vertex algebra}
associated with $\g$ at level~$k$ \cite{FZ,Zhu96,LL}.

The vertex algebra
$V^k(\g)$ is a vertex operator algebra, provided that $k+h^\vee\not=0$,
by the {\em Sugawara construction}.
More specifically, set
$$S=\displaystyle{\frac{1}{2}} \sum_{i=1}^{d}
x_{i}(-1) x^{i}(-1)  {\bf 1},$$
where $\{x_{i}\colon i=1,\ldots,d\}$ is the dual
basis of a basis $\{x^{i}\colon i=1,\ldots,\dim \g\}$ of $\g$
with respect to the bilinear form $(~|~)$, with $d = \dim \g$.
Then for $k \not=-h^\vee$, the vector
$\omega=\displaystyle{\frac{S}{k+h^\vee}}$
is a conformal vector of $V^k(\g)$ 
with central charge
$$c(k)=\displaystyle{\frac{k \dim \g}{k+h^\vee}}.$$
Note that, writing $\omega(z) = \sum\limits_{n \in \Z} L_n z^{-n-2}$,
we have
\begin{align*}
L_0= \dfrac{1}{2(k+h^\vee)}
\left( \sum\limits_{i=1}^{d}x_i(0)x^i(0)+\sum\limits_{n=1}^{\infty}\sum\limits_{i=1}^{d}(x_i(-n)x^i(n)+x^i(-n)x_i(n)) \right),
\end{align*}
\begin{align*}
L_n=\dfrac{1}{2(k+h^\vee)}
\left(\sum\limits_{m=1}^{\infty}\sum\limits_{i=1}^{d}x_i(-m)x^i(m+n)
+\sum\limits_{m=0}^{\infty}\sum\limits_{i=1}^{d}x^i(-m+n)x_i(m)\right), \quad
\text{if} \ n\neq 0.
\end{align*}

\begin{lem}[{\cite{Kac_infinite}}]
\label{lem:sugawara-vs-g}
We have
$$[L_n,x(m)]= - m x(m+n), \quad
\text{ for }x\in \g,\ m,n\in \Z,$$
and $L_n\mathbf{1}=0$ for $n\geq -1$.
\end{lem}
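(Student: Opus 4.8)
The vacuum identity is the easy half: for $n\geq -1$ one has $L_n\mathbf 1=\omega_{(n+1)}\mathbf 1=0$ because $a_{(p)}\mathbf 1=0$ for every $a\in V^k(\g)$ and every $p\geq 0$. Reading this off the explicit formulas for $L_n$ quoted above, each summand is applied to $\mathbf 1$ through a mode $x_i(\ell)$ or $x^i(\ell)$ with $\ell\geq 0$, and such modes kill $\mathbf 1$ (using $x(0)\mathbf 1=0$, valid since $\g\otimes\C[t]$ acts trivially on $\C_k$). So the content of the lemma is the relation $[L_n,x(m)]=-m\,x(m+n)$, and the plan is to derive it from the vertex algebra structure of $V^k(\g)$ alone — which is defined for every $k$ and independently of the conformal structure, so there is no circularity — by computing the products $\omega_{(j)}x$ and invoking the Borcherds commutator formula $[a_{(p)},b_{(q)}]=\sum_{j\geq 0}\binom{p}{j}(a_{(j)}b)_{(p+q-j)}$.

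First I would record the elementary products coming from the operator product expansion of the currents, with $x$ now denoting $x(-1)\mathbf 1$: $(x_i)_{(0)}x=[x_i,x](-1)\mathbf 1$, $(x_i)_{(1)}x=k(x_i|x)\mathbf 1$, and $(x_i)_{(j)}x=0$ for $j\geq 2$. Since $S=\tfrac12\sum_i x_i(-1)x^i(-1)\mathbf 1$ has degree $2$ while $x$ has degree $1$, only $S_{(0)}x$, $S_{(1)}x$, $S_{(2)}x$ can be nonzero, and the mode expansion of the normally ordered product reduces each of these to a finite sum of Lie-algebraic terms. The outcome — this is the main computation — should be
\begin{align*}
S_{(0)}x=(k+h^\vee)\,x(-2)\mathbf 1=(k+h^\vee)Tx,\qquad S_{(1)}x=(k+h^\vee)\,x,\qquad S_{(2)}x=0,
\end{align*}
so that $\omega_{(0)}x=Tx$, $\omega_{(1)}x=x$ and $\omega_{(j)}x=0$ for $j\geq 2$. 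Feeding $a=\omega$, $p=n+1$, $b=x$, $q=m$ into the commutator formula and using $(Tx)_{(\ell)}=-\ell\,x(\ell-1)$ then gives
\begin{align*}
[L_n,x(m)]=(Tx)_{(n+1+m)}+(n+1)x(n+m)=-(n+1+m)x(n+m)+(n+1)x(n+m)=-m\,x(m+n).
\end{align*}

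The main obstacle is the evaluation of $S_{(0)}x$ and $S_{(1)}x$, where the combinatorics of the normally ordered product must be combined with the structure of $\g$; two inputs do the work. The $\g$-invariance of the Casimir tensor $\sum_i x_i\otimes x^i$ gives the identity $\sum_i[x_i,x]\otimes x^i=-\sum_i x_i\otimes[x^i,x]$, which makes the ``classical'' reordering contributions cancel in pairs. Reordering the two currents inside $S$ past each other then leaves a quantum correction governed by the Casimir of the adjoint representation, which with the normalization $(~|~)=\tfrac1{2h^\vee}\times(\text{Killing form})$ equals $\sum_i[x^i,[x_i,x]]=2h^\vee x$. It is exactly this $2h^\vee$, together with the $2k$ produced by the central term of $\fg$ via $\sum_i(x_i|x)x^i=x$, that assembles the factor $2(k+h^\vee)$ cancelling the denominator of $\omega=S/(k+h^\vee)$; this is also where the hypothesis $k\neq -h^\vee$ is used. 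One could instead carry out the whole argument directly inside $U(\fg)$ using the explicit expressions for $L_n$, with the same Lie-theoretic identities; the price is that one must then track the ranges of the infinite summations — hence the relative signs of $m$, $n$, $m+n$ — in order to isolate the finitely many summands that are not already in normal order, whereas routing everything through the single graded vector $S$ confines all of that bookkeeping to one place.
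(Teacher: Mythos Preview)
Your proof is correct. The paper itself gives no proof of this lemma---it is simply quoted from Kac's book \cite{Kac_infinite}---so there is nothing to compare against on the paper's side.

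For context, the argument in Kac is the direct Lie-algebraic one you mention at the end: one commutes $x(m)$ past the infinite sum defining $L_n$ term by term, uses the invariance of the Casimir tensor and the identity $\sum_i[x^i,[x_i,x]]=2h^\vee x$ to collapse the commutators, and tracks the normal-ordering corrections by hand. Your route through the Borcherds commutator formula and the products $\omega_{(j)}x$ is the vertex-algebraic repackaging of the same computation; as you note, its advantage is that all the infinite-sum bookkeeping is absorbed into the single graded computation of $S_{(0)}x$, $S_{(1)}x$, $S_{(2)}x$. Either way the substantive inputs are the same two Lie-theoretic identities plus $\sum_i(x_i|x)x^i=x$, and the assumption $k\neq -h^\vee$ enters only when dividing $S$ by $k+h^\vee$ to obtain $\omega$.
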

We have
$V^k(\g) _{\Delta}=\{v\in V^k(\g)\mid L_0 v=\Delta v\}$
and $T=L_{-1}$ on $V^k(\g)$, provided that $k+h^{\vee}\ne 0$.

{Any
graded quotient of $V^k(\g)$
as $\fg$-module
has the structure of a quotient vertex algebra.
In particular,
the  unique simple graded quotient $L_k(\g)$
is a vertex algebra,
and is called the {\em simple affine vertex algebra associated with $\g$ at level $k$. }}

\subsection{Associate graded vertex Poisson algebras of affine vertex algebras}
It is known by Li \cite{Li05}
that any vertex algebra $V$ admits a canonical filtration $F^\bullet V$,
called the {\em Li filtration} of $V$.
For a quotient $V$ of $V^k(\g)$,
$F^\bullet V$ is described as follows.
The subspace
 $F^p V$ is spanned by the elements
$$
y_{1}(-n_1-1)\cdots y_{r}(-n_r-1)\mathbf{1}
$$
with $y_{i} \in \g$,
$n_i\in\Z_{\geq 0}$, $ n_1+\cdots +n_r\geq p$.
We have
\begin{align}\nonumber
& V=F^0V\supset F^1V\supset\cdots, \quad \bigcap_{p}F^pV=0,\\ \label{eq:translation}
& TF^pV\subset F^{p+1}V,\\\nonumber
& a_{(n)}F^{q}V\subset F^{p+q-n-1}V \ for \ a\in F^{p}V, \ n\in\Z,\\\nonumber
& a_{(n)}F^{q}V\subset F^{p+q-n}V \ for \ a\in F^{p}V, \ n\geq 0.
\end{align}
Here we have set $F^pV=V$ for $p<0$.

Let ${\rm gr}^FV=\bigoplus_p F^pV/F^{p+1}V$ be the associated graded vector space.
The space ${\rm gr}^FV$ is a vertex Poisson algebra by
\begin{align*}
& \sigma_{p}(a)\sigma_{q}(b)=\sigma_{p+q}(a_{(-1)}b),\\
& T\sigma_{p}(a)=\sigma_{p+1}(Ta),\\
& \sigma_{p}(a)_{(n)}\sigma_{q}(b)=\sigma_{p+q- n}(a_{(n)}b)
\end{align*}
for $a,b\in V$,
$n\geq 0$,
where $\sigma_p \colon F^p(V)\rightarrow F^pV/F^{p+1}V$ is the principal symbol map.
In particular,
$\on{gr}^F V$ is a $\g[t]$-module by the correspondence
\begin{align}
\g[t]\ni x(n)\longmapsto \sigma_0(x)_{(n)}\in \End(\on{gr}^F V)
\label{eq:g[t]-action-in-gr}
\end{align}
for $x\in \g$, $n\geq 0$.

The filtration  $F^\bullet V$
is compatible with the grading:
$F^pV=\bigoplus\limits_{\Delta\in \Z_{\geq 0}}F^pV_{\Delta}$,
where
 $F^p V_\Delta := V_\Delta \cap F^p V$.

Let $U_{\bullet}(t^{-1}\g[t^{-1}])$ be the PBW filtration of
$U(t^{-1}\g[t^{-1}])$,
that is,
$U_{p}(t^{-1}\g[t^{-1}])$
is the subspace of $U(t^{-1}\g[t^{-1}])$
spanned by monomials $y_{1} y_{2} \dots y_{r}$ with $y_i \in \g$,
$r\leq p$.
Define
\begin{align*}
G_p V= U_{p}(t^{-1}\g[t^{-1}])\mathbf{1}.
\end{align*}
Then $G_\bullet V$ defines an increasing filtration of $V$.
We have
\begin{align}
F^p V_{\Delta}=G_{\Delta-p}G_{\Delta},
\end{align}
where
 $G_p V_{\Delta}:=G_p V\cap V_{\Delta}$,
see \cite[Proposition 2.6.1]{Ara12}.
Therefore,
the graded space ${\rm gr}^G V = \bigoplus\limits_{p \in \Z_{\geq 0}}
G_p V/G_{p-1}V$ is isomorphic to $\on{gr}^F V$.
In particular,
we have
\begin{align*}
\gr\cong \on{gr}U_{\bullet}(t^{-1}\g[t^{-1}])\cong S(t^{-1}\g[t^{-1}]).
\end{align*}
The action of $\g[t]$ on $\gr=S(t^{-1}\g[t^{-1}])$
coincides with the one
induced from the action of $\g[t]$ on
$\g[t,t^{-1}]/\g[t] \cong t^{-1}\g[t^{-1}]$.
More precisely,
the element $x(m)$, for $x \in \g$ and $m \in \Z_{\geq 0}$,
acts on $S(t^{-1}\g[t^{-1}])$ as follows:
\begin{align}\nonumber
& x(m).{\bf 1} =0, & \\\label{eq:action}
& x(m).v  = \sum\limits_{j=1}^r
 \sum\limits_{n_{j}-m >0}  y_{1}(-n_1)\ldots 
[x,y_{{j}}] (m- n_{j})
\ldots y_{r}(-n_r), &
\end{align}
if $v =y_{1}(-n_1)\ldots y_{r}(-n_r)$
with $y_i \in \g$, $n_1,\ldots,n_r \in \Z_{>0}$.

\subsection{Zhu's $C_2$-algebras and associated varieties
of affine vertex algebras}
\label{sub:Zhu affine vertex algebras}

We have \cite[Lemma 2.9]{Li05}
$$F^p V = {\rm span}_\C\{ a_{(-i-1)} b \colon a \in V, i \geq 1, b \in F^{p-i} V \}$$
for all $p \geq 1$. In particular,
$$F^1 V = C_2(V),$$
where $C_2(V)=\on{span}_{\C}\{a_{(-2)}b\mid a,b\in V\}$.
Set
$$R_V = V/C_2(V) = F^0 V / F^1 V \subset {\rm gr}^FV.$$
It is known by Zhu \cite{Zhu96} that $R_V$ is a Poisson algebra.
The Poisson algebra structure can be understood as the restriction of the vertex Poisson structure of ${\rm gr}^FV$. It is given by
$$\bar a \cdot \bar b = \overline{a_{(-1)} b}, \quad \{\bar a, \bar b\}= \overline{a_{(0)}b},$$
for $a, b \in V$, where $\bar a = a+C_2(V)$.

By definition \cite{Ara12}, the
{\em associated variety} of $V$
is
the reduced scheme $$X_V:= {\rm Specm}(R_V).$$

It is easily seen that
$$F^1 V^k(\g) = C_2(V^k(\g)) = t^{-2} \g[t^{-1}] V^k(\g).$$

The following map defines an isomorphism of Poisson algebras
\begin{align*}
\begin{array}[t]{rcl}
\C[\g^*] \cong S(\g) & \longrightarrow & R_{V^k(\g)}  \\[0.2em]
\g \ni x  & \longmapsto  & x (-1) {\bf 1}
+ t^{-2} \g[t^{-1}] V^k(\g). 
\end{array}
\end{align*}
Therefore, $R_{V^k(\g)}  \cong \C[\g^*]$ and so, $
X_{V^k(\g)}\cong \g^*$.

More generally, if $V$ is a quotient of $V^k(\g)$
by some ideal $N$, then
we have
\begin{align}
R_{V}\cong \C[\g^*]/I_N
\end{align}
as Poisson algebras,
where $I_N$ is the image of $N$ in $R_{V^k(\g)}=\C[\g^*]$.
Then $X_V$ is just the zero locus of $I_N$ in
$\g^*$.
It is a closed $G$-invariant conic subset of $\g^*$.

Identifying $\g^*$ with $\g$ through the bilinear form $(~|~)$,
one may view $X_V$ as a subvariety of $\g$.

\subsection{PBW basis}
Let ${\Delta}_{+}=\{\be_1,\cdots,\be_q\}$ be the set of positive roots for $\g$ with respect
to a triangular decomposition
$\g= \mathfrak{n}_- \oplus \mathfrak{h} \oplus \mathfrak{n}_+$,
where $q=(d-\ell)/2$ and $\ell={\rm rk}(\g)$.

Form now on, we
fix a basis
$$\{u^i,  e_{\be_j}, f_{\be_j}\colon\  i=1,\ldots,\ell, \, j=1,\ldots, q\}$$
of $\g$ such that $\{u^{i} \colon i=1,\ldots,\ell\}$
is an orthonormal basis of $\h$ with respect to $(~|~)$ and
$(e_{\be_i}|f_{\be_i})=1$ for $i=1,2,\cdots,q$.
In particular, $[e_{\be_i}, f_{\be_i}]= \be_i$
for $i=1,\ldots,q$ (see, for example, \cite[Proposition 8.3]{Humphreys}),
where $\h^*$ and $\h$ are identified through $(~|~)$.
One may also assume that ${\rm ht}(\be_i)\leq {\rm ht}(\be_j)$ for $i< j$,
where ${\rm ht}(\be_i)$ stands for the height of the positive root $\be_i$.

We define the structure constants $c_{\al,\be}$ by
$$[e_\al,e_{\be}] = c_{\al,\be} e_{\al+\be},$$
provided that $\al$, $\be$ and $\al+\be$ are in $\Delta$.
Our convention is that $e_{-\al}$ stands for $f_\al$ if $\alpha  \in \Delta_+$.
If $\al$, $\be$ and $\al+\be$ are in $\Delta_+$,
then from the equalities,
\begin{align*}
c_{-\al,\al+\be} = ( f_{\be} | [f_{\al},e_{\al+\be}] ) = - ( f_{\be} | [e_{\al+\be},f_{\al}] )
= - ( [f_{\be}, e_{\al+\be}]|f_{\al}) = -  c_{-\be,\al+\be},
\end{align*}
we get that
\begin{align}
\label{eq:structure_constant}
c_{-\al,\al+\be}= -  c_{-\be,\al+\be}.
\end{align}

By \eqref{eq:PBW},
the above basis of $\g$
 induces a basis of $V^k(\g)$ consisted of ${\bf 1}$
 and the elements of the form
  \begin{align}
 \label{eq:PBW_basis}
 z = z^{(+)}  z^{(-)}  z^{(0)}  {\bf 1},
 \end{align}
with
 \begin{align*}
 & z^{(+)} := e_{\be_{1}}(-1)^{a_{1,1}}\cdots
e_{\be_{1}} (- r_1)^{a_{1,r_1}}\cdots
e_{\be_q}(- 1)^{a_{q,1}}\cdots e_{\be_q}(- r_q)^{a_{q,r_q}} , \\ \nonumber
& z^{(-)} : =f_{\be_1}(-1)^{b_{1,1}}\cdots f_{\be_1}(- s_1)^{b_{1,s_1}}\cdots
f_{\be_q}(- 1)^{b_{q,1}}
\cdots f_{\be_q}(- s_q)^{b_{q,s_q}} , \\\nonumber
&  z^{(0)} := u^1(- 1)^{c_{1,1}}\cdots u^1(- t_1)^{c_{1,t_1}}
\cdots u^\ell (- 1)^{c_{\ell,1}}\cdots u^\ell(- t_\ell )^{c_{\ell, t_\ell}},
\end{align*}
where $r_1,\ldots,r_q, s_1,\ldots,s_q,,t_1,\ldots,t_\ell$
are positive integers, and $a_{l,m},b_{l,n},c_{i,j}$, for $l =1,\ldots,q$,
$m=1,\ldots,r_l$, $n=1,\ldots,s_l$,  $i = 1,\ldots,\ell$, $j=1,\ldots,t_i$
are nonnegative integers such
that at least one of them is nonzero.

\begin{defn}
\label{def:monomial_V}
Each element $x$ of $V^k(\g)$
is a linear combination of elements in the above PBW basis,
each of them will be called a {\em PBW monomial} of $x$.
\end{defn}

\begin{defn}
\label{def:depth_V}
For a PBW monomial $v$ as in \eqref{eq:PBW_basis},
we call {\em depth} of $v$ the integer
\begin{align*}
\dep(v) &=\sum\limits_{i=1}^q \left( \sum\limits_{j=1}^{r_i}
a_{i,j}(j-1) + \sum\limits_{j=1}^{s_i} b_{i,j} (j-1) \right)
+  \sum\limits_{i=1}^\ell \sum\limits_{j=1}^{t_i} c_{i,j}(j-1).&
\end{align*}
In other words, a PBW monomial $v$ has depth $p$ means that
$v \in F^{p}V^k(\g)$ and $v \not \in F^{p+1}V^k(\g)$.

By convention, $\dep({\bf 1})=0$.


For a PBW monomial $v$ as in \eqref{eq:PBW_basis},
we call {\em degree} of $v$ the integer
\begin{align*}
\deg(v) &=
 \sum\limits_{i=1}^q \left( \sum\limits_{j=1}^{r_i}
a_{i,j} + \sum\limits_{j=1}^{s_i} b_{i,j}\right)
+ \sum\limits_{i=1}^\ell \sum\limits_{j=1}^{t_i} c_{i,j} ,&
\end{align*}
In other words, $v$ has degree $p$ means that
$v \in G_{p}V^k(\g)$ and $v \not \in G_{p-1}V^k(\g)$
since the PBW filtration of $V^k(\g)$ coincides with the
standard filtration $G_\bullet V^k(\g)$.

By convention, $\deg({\bf 1})=0$.
\end{defn}

Recall that a {\em singular vector} of a $\g[t]$-representation $M$
is a vector $m \in M$
such that $e_{\alpha}(0).m=0$, for all $\alpha \in {\Delta}_{+}$,
and $f_\theta(1).m=0$, where $\theta$ is the highest positive root of $\g$.

From the identity
\begin{align*}
L_{-1}=
&  \dfrac{1}{k+h^\vee}  \left( \sum\limits_{i=1}^{\ell}\sum\limits_{m=0}^{\infty} u^i(-1-m) u^i(m)+\sum\limits_{\al\in{\Delta}_{+}}\sum\limits_{m=0}^{\infty}
(e_{\al}(-1-m)f_{\al}(m)+f_{\al}(-1-m)e_{\al}(m))\right),
\end{align*}
we deduce the following easy observation, which will be useful
in the proof of the main result.

\begin{lem}
\label{lem:Sugawara_singular_vector}
If $w$ is a singular vector of $V^k(\g)$,
then
$$
L_{-1} w=\dfrac{1}{k+h^\vee} \left(\sum\limits_{i=1}^{\ell}u^i(-1)u^i(0)
+\sum\limits_{\al\in{\Delta}_{+}}e_{\al}(-1)f_{\al}(0)\right) w \, .$$
\end{lem}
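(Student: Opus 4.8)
The plan is to show that any singular vector $w$ of $V^k(\g)$ is in fact annihilated by the whole subalgebra $t\g[t]$ of $\fg$, and then to read off the claimed identity directly from the displayed formula for $L_{-1}$ by a one-line truncation.

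First I would record the Lie-algebraic fact that, with respect to the standard affine triangular decomposition, the subalgebra $\widehat{\n}_+:=\n_+\oplus t\g[t]$ of $\fg$ is generated, as a Lie algebra, by the affine Chevalley operators $e_{\al_1}(0),\dots,e_{\al_\ell}(0)$ and $e_0:=f_\theta(1)$, where $\al_1,\dots,\al_\ell$ are the simple roots of $\g$. Indeed, $\n_+$ is generated by the $e_{\al_i}(0)$; since $f_\theta$ spans the lowest weight space of the (irreducible) adjoint representation, $\on{ad}(U(\n_+))f_\theta=\g$, hence $\g\otimes t\subset\widehat{\n}_+$; and then an immediate induction on $m$ using $[\g\otimes t,\g\otimes t^m]\subset\g\otimes t^{m+1}$ (with no central contribution, as $m+1\geq 2$) gives $t\g[t]\subset\widehat{\n}_+$, the reverse inclusion being clear.

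Next, since $w$ is a singular vector we have $e_{\al_i}(0)w=0$ for $i=1,\dots,\ell$ (as $\al_i\in\Delta_+$) and $f_\theta(1)w=0$ by definition, so $w$ is killed by each generator of $\widehat{\n}_+$. A vector killed by a generating set of a Lie algebra is killed by the whole Lie algebra, since every iterated bracket acts by $[x,y].w=x.(y.w)-y.(x.w)$ and one inducts on bracket length. Hence $w$ is annihilated by $\widehat{\n}_+$; in particular $x(m)w=0$ for all $x\in\g$ and all $m\geq 1$. Substituting into
\[
L_{-1} w = \frac{1}{k+h^\vee}\left(\sum_{i=1}^\ell\sum_{m\geq 0} u^i(-1-m)u^i(m)
+ \sum_{\al\in\Delta_+}\sum_{m\geq 0}\bigl(e_\al(-1-m)f_\al(m)+f_\al(-1-m)e_\al(m)\bigr)\right)w ,
\]
every summand with $m\geq 1$ vanishes because its rightmost factor $u^i(m)$, $f_\al(m)$ or $e_\al(m)$ annihilates $w$; among the $m=0$ summands, $f_\al(-1)e_\al(0)w=0$ since $\al\in\Delta_+$, while $u^i(-1)u^i(0)w$ and $e_\al(-1)f_\al(0)w$ survive. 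This is exactly the asserted identity. There is essentially no obstacle; the only point requiring care is the bookkeeping observation that being singular forces annihilation by all of $t\g[t]$, not merely by the two explicitly listed families of operators, after which the computation is routine.
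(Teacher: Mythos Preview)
Your proof is correct and follows exactly the approach the paper has in mind: the paper does not actually prove this lemma but merely says it is ``deduced'' as an ``easy observation'' from the displayed formula for $L_{-1}$, and you have supplied precisely the missing standard detail---namely that a singular vector is annihilated by all of $t\g[t]$ (via the fact that $\n_+\oplus t\g[t]$ is generated by the affine Chevalley generators), after which the truncation of the $L_{-1}$ formula is immediate.
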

\subsection{Basis of associated graded vertex Poisson algebras}

Note that $\gr=S(t^{-1}\g[t^{-1}])$ has a basis consisting
of ${\bf 1}$ and elements of the form \eqref{eq:PBW_basis}.
Similarly to Definition \ref{def:monomial_V}, we have the following
definition.

\begin{defn}
\label{def:monomial_S}
Each element $x$ of $S(t^{-1}\g[t^{-1}])$
is a linear combination of elements in the above basis,
each of them will be called a {\em monomial} of $x$.
\end{defn}

%


{As in the case of $V^k(\g)$,}
the space $S(t^{-1}\g[t^{-1}])$ has two natural gradations.
The first one is induced from the degree of elements as
polynomials.
We shall write $\deg(v)$ for the degree of a homogeneous
element $v \in S(t^{-1}\g[t^{-1}])$ with respect to this gradation.

The second one is induced from the Li filtration via the isomorphism
$S(t^{-1}\g[t^{-1}]) \cong {\rm gr}^F V^k(\g)$.
%
The degree of a homogeneous
element $v \in S(t^{-1}\g[t^{-1}])$ with respect to the gradation
induced by Li filtration
will be called the {\em depth} of $v$, and will be denoted by $\dep(v)$.

Notice that any element $v$ of the form \eqref{eq:PBW_basis} is homogenous
for both gradations.
By convention, $\deg({\bf 1})=\dep({\bf 1})=0$.

As a consequence of \eqref{eq:action}, we get that
\begin{align}
\label{eq:homogeneous}
\deg(x(m).v )  = \deg(v)  \quad \text{ and }
\quad \dep(x(m).v)= \dep(v) - m,
\end{align}
for
$m\geq 0$, $x\in\g$,  and any homogeneous element $v \in S(t^{-1}\g[t^{-1}])$ 
with respect to both gradations.

In the sequel, we will also
use the following notation, for $v$ of the form \eqref{eq:PBW_basis},
viewed either as an element of $V^k(\g)$ or of $S(t^{-1}\g[t^{-1}])$:
\begin{align}
\label{eq:deg_-1}
\deg_{-1}^{(0)}(v) :=
\sum\limits_{j=1}^\ell c_{j,1 },
 \end{align}
 which corresponds to the degree of the element
 obtained from $v^{(0)}$ by keeping only the terms of
 depth $0$, that is, the terms  $u^{i}(-1)$, $i=1,\ldots,\ell$.

Notice that a nonzero depth-homogenous
element of $S(t^{-1}\g[t^{-1}])$ has depth $0$ if
and  only if
its image in
$$R_{V^k(\g)} = V^k(\g)/t^{-2} \g[t^{-1}] V^k(\g)$$
is nonzero.
\section{Proof of the main result}
\label{sec:main_proof}

This section is devoted to the proof of Theorem \ref{Th:main}.

\subsection{Main strategy}
{Let $N_k$ be the maximal graded submodule of $V^k(\g)$,
so that $L_k(\g)=V^k(\g)/N_k$.
Our aim is to show that if $V^k(\g)$ is not simple, that is,
$N_k\not=\{0\}$, then $X_{L_k(\g)}$
is strictly contained in $\g^*\cong\g$, that is,
the image $I_k:=I_{N_k}$ of $N_k$ in
$R_{V^k(\g)}=\C[\g^*]$ is nonzero.}


{
For $k=-h^\vee$, it follows from  \cite{FreGai04} that
$I_k$ is the defining ideal of the
nilpotent
cone $ \mathcal{N}(\g)$ of $\g$,
and so
$X_{L_k(\g)} = \mathcal{N}(\g)$ (see \cite{A11} or Subsection \ref{subsection:remark}
below).
Hence, there is no loss of generality in assuming that $k+h^\vee \not=0$.}

{
Henceforth, we suppose that $k+h^\vee \not=0$ and
that $V^k(\g)$ is not simple,
that is,
$N_k\ne \{0\}$.
Then there exists at least
one non-trivial (that is, nonzero and different from
{\bf 1}) singular vector $w$
in $V^k(\g)$.
Theorem \ref{theorem:image-of-the-singular}
states that
the image of $w$ in $I_k$ is nonzero,
and
this proves
Theorem \ref{Th:main}.
}

The rest of this section is devoted 
to the proof of  Theorem \ref{theorem:image-of-the-singular}.

Let $w$ be a nontrivial singular vector of $V^k(\g)$.
One can assume that $w \in F^p V^k(\g) \setminus F^{p+1} V^k(\g)$ for
some $p\in \Z_{\geq 0}$.

The image
$$\bar{w}:=\sigma(w)$$
of this singular vector in $S(t^{-1}\g[t^{-1}]) \cong {\rm gr}^F V^k(\g)$
is a nontrivial singular vector of $S(t^{-1}\g[t^{-1}])$.
Here $\sigma \colon V^k(\g) \to {\rm gr}^F V^k(\g)$
stands for the principal symbol map.
It follows from \eqref{eq:homogeneous}
that one can assume that $\bar{w}$ is homogenous with respect to both gradations
on $S(t^{-1}\g[t^{-1}])$.
In particular $\bar w$ has depth $p$.

It is enough to show that
$p=0$, that is, $\bar w$ has depth zero.

Write
$$w = \sum\limits_{j \in J} \lambda_j w^j,$$
where $J$ is a finite index set, $\lambda_j $ are nonzero
scalar for all $j \in J$,
and $w_j$ are pairwise distinct PBW monomials of the form \eqref{eq:PBW_basis}.
Let $I \subset J$ be the subset of $i \in J$ such that $\dep \bar w^{i} = p = \dep \bar w$.
Since $w \in F^p V^k(\g) \setminus F^{p+1} V^k(\g)$,
the set $I$ is nonempty.
Here, $\bar{w}^{i}$ stands for the image of $w^{i}$ in ${\rm gr}^F V^k(\g) \cong S(t^{-1}\g[t^{-1}])$.

More specifically, for any $j \in I$, write
 \begin{align}
 \label{eq:PBW_w_i}
 w^{j} = ( w^{j})^{(+)} ( w^{j})^{(-)} ( w^{j})^{(0)}  {\bf 1},
 \end{align}
with
 \begin{align*}
& ( w^{j})^{(+)} := e_{\be_{1}}(-1)^{a_{1,1}^{(j)}}\cdots
e_{\be_{1}} (- r_1)^{a_{1,r_1}^{(j)}}\cdots
e_{\be_q}(- 1)^{a_{q,1}^{(j)}}\cdots e_{\be_q}(- r_q)^{a_{q,r_q}^{(j)}} \\\nonumber
& ( w^{j})^{(-)} : =f_{\be_1}(-1)^{b_{1,1}^{(j)}}\cdots f_{\be_1}(- s_1)^{b_{1,s_1}^{(j)}}\cdots
f_{\be_q}(- 1)^{b_{q,1}^{(j)}}
\cdots f_{\be_q}(- s_q)^{b_{q,s_q}^{(j)}} , \\\nonumber
&  ( w^{j})^{(0)} := u^1(- 1)^{c_{1,1}^{(j)}}\cdots u^1(- t_1)^{c_{1,t_1}^{(j)}}
\cdots u^\ell (- 1)^{c_{\ell,1}^{(j)}}\cdots u^\ell(- t_\ell )^{c_{\ell, t_\ell}^{(j)}},
\end{align*}
where $r_1,\ldots,r_q, s_1,\ldots,s_q,,t_1,\ldots,t_\ell$
are nonnegative integers, and
$a_{l,m}^{(j)},b_{l,n}^{(j)},c_{i,p}^{(j)}$, for $l =1,\ldots,q$,
$m=1,\ldots,r_l$,  $n=1,\ldots,s_l$,
$i =1,\ldots,\ell$,
$p=1,\ldots,t_i$,
are nonnegative integers such
that at least one of them is nonzero.

The integers $r_l$'s, for $l=1,\ldots,q$, are chosen so that at least one of the
$a_{l,r_l}^{(j)}$'s is nonzero for $j$ running through $J$
if for some $j \in J$, $(w^{j})^{(+)}\not= 1$.
Otherwise, we just set  $(w^{j})^{(+)} := 1$.
Similarly are defined the  integers $s_l$'s and $t_m$'s,
for $l =1,\ldots,q$ and $m=1,\ldots,\ell$.

By our assumption, note that for all $i \in I$,
\begin{align*}
& \sum\limits_{n=1}^q \left( \sum\limits_{l=1}^{r_n}
a_{n,l}^{(i)} + \sum\limits_{l=1}^{s_n} b_{n,l}^{(i)} \right)
+  \sum\limits_{n=1}^\ell \sum\limits_{l=1}^{t_n} c_{n,l}^{(i)} = \deg(\bar w)&\\
&  \sum\limits_{n=1}^q \left( \sum\limits_{l=1}^{r_n}
a_{n,l}^{(i)} (l-1) + \sum\limits_{l=1}^{s_n} b_{n,l}^{(i)} (l-1) \right)
+ \sum\limits_{n=1}^\ell \sum\limits_{l=1}^{t_n} c_{n,l}^{(i)} (l-1)
= \dep(\bar w) = p.&
\end{align*}

\subsection{A  technical lemma}
\label{sub:technical}
In this paragraph we stand in the commutative setting, and we only
deal with $\bar w \in S(t^{-1}\g[t^{-1}])$ and its monomials $\bar w^{i}$'s,
for $ i \in I$.

Recall from \eqref{eq:deg_-1} that,
$$
\deg_{-1}^{(0)}(w^i)= \sum\limits_{j=1}^\ell c_{j,1}^{(i)}
$$
for $i \in I$.
Set
$$d_{-1}^{(0)}(I) := \max\{\deg_{-1}^{(0)}( w^i) \colon i \in I\},$$
and
$$I_{-1}^{(0)} := \{ i\in I \colon \deg_{-1}^{(0)}(w^i)= d_{-1}^{(0)}(I)\}.$$
If $(w^i)^{(0)}=1$ for all $i \in I$, we just set $d_{-1}^{(0)}(I)=0$
and then $I_{-1}^{(0)}=I$.

\begin{lem}
\label{lem:wi_-}
If  $i\in I_{-1}^{(0)}$, then $(\bar w^i)^{(-)}=1$. In
 other words, for $i \in I_{-1}^{(0)}$, we have
$\bar w^{i}= (\bar w^{i})^{(0)}  (\bar w^{i})^{(+)} {\bf 1}.$
\end{lem}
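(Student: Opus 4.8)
The strategy is to use the singular-vector condition for the operators $e_\alpha(0)$ to kill the bottom ($t^{-1}$) layer of the $f$-part of those monomials whose $u^i(-1)$-content is maximal. First I would organize the monomials $\bar w^i$, $i\in I$, by their depth-$0$ structure: each $\bar w^i$ has a well-defined ``layer at $t^{-1}$'', namely the submonomial built out of $e_\beta(-1)$, $f_\beta(-1)$ and $u^j(-1)$, and a ``tail'' built out of $x(-n)$ with $n\geq 2$. Since we are in the commutative world $S(t^{-1}\g[t^{-1}])$, the action \eqref{eq:action} of $e_\alpha(0)$ is a derivation that preserves $\deg$ and lowers $\dep$ by $0$ (by \eqref{eq:homogeneous} with $m=0$), so it maps the depth-$p$ part $\{\bar w^i : i\in I\}$ into itself; thus the coefficient of any fixed depth-$p$ monomial in $\sum_{i\in I}\lambda_i\, e_\alpha(0).\bar w^i$ must vanish.

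The heart of the argument is to look for a monomial that can only be produced in one way. Suppose some $\bar w^i$ with $i\in I_{-1}^{(0)}$ has $(\bar w^i)^{(-)}\neq 1$; choose among all such $i$ (and all such choices of a factor $f_{\be}(-n)$ appearing in $(\bar w^i)^{(-)}$) the one with the \emph{lowest} height $\operatorname{ht}(\beta)$, and within that with $n$ minimal. Apply $e_\beta(0)$. One of the terms in $e_\beta(0).\bar w^i$ replaces a factor $f_\beta(-n)$ by $[e_\beta,f_\beta](-n)=\beta(-n)$, i.e.\ by a linear combination of the $u^j(-n)$. When $n=1$ this \emph{increases} $\deg_{-1}^{(0)}$ by one, producing a monomial with $\deg_{-1}^{(0)}=d_{-1}^{(0)}(I)+1$; when $n\geq 2$ it produces a $u^j(-n)$ with $n\geq 2$ appearing alongside the still-maximal $u$-layer. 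In either case I would argue that such a monomial has strictly larger $\deg_{-1}^{(0)}$-type invariant (or, more precisely, lies in a part of $S(t^{-1}\g[t^{-1}])$ that no other $e_\alpha(0).\bar w^{i'}$, $i'\in I$, can reach, given the minimality of $\operatorname{ht}(\beta)$ and of $n$, together with the structure-constant identities \eqref{eq:structure_constant} governing which brackets $[e_\alpha, f_\beta]$ can land on $\beta$). Hence its total coefficient in $\sum_{i\in I}\lambda_i e_\beta(0).\bar w^i$ cannot cancel, contradicting $e_\beta(0).\bar w=0$.

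**The main obstacle.**
The delicate point is the cancellation bookkeeping: many monomials $\bar w^{i'}$ can, under various $e_\alpha(0)$, generate monomials with the same $\deg$ and $\dep$ and a $u^j$-factor, so I must isolate an invariant fine enough that the ``bad'' monomial produced from the minimal-height $f_\beta(-n)$ is genuinely unique. The natural candidate is the pair $\bigl(\deg_{-1}^{(0)},\ \text{multiset of }f\text{-factors ordered by height}\bigr)$, and the key sub-claim is that raising $\deg_{-1}^{(0)}$ forces removal of exactly one lowest-height $f$-factor, a move that, by the minimality choice, is incompatible with what happens to any competing $\bar w^{i'}\in I_{-1}^{(0)}$ (for $i'\in I\setminus I_{-1}^{(0)}$ the term is automatically in a lower $\deg_{-1}^{(0)}$-stratum so cannot contribute either). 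I expect verifying this uniqueness—carefully tracking the three types of terms in \eqref{eq:action} (commutators $[e_\beta,f_\gamma]$ that are roots, that are in $\h$, and the $e_\gamma$-part) and invoking \eqref{eq:structure_constant}—to be the only real work; everything else is a matter of setting up the filtration-by-invariant and reading off \eqref{eq:homogeneous}.
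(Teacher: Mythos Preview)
Your overall shape is right: work in $S(t^{-1}\g[t^{-1}])$, use that $e_\alpha(m).\bar w=0$ for $m\geq 0$, and try to produce a monomial with strictly larger $\deg_{-1}^{(0)}$ than anything in $I$. You also correctly pick the minimal-height root $\beta$ appearing in the $f$-part, which is exactly what the paper does (this is what guarantees $[e_\beta,f_\gamma]\notin\h$ for the other $\gamma$'s). The divergence is in the \emph{second} choice: you take the minimal $n$ and apply $e_\beta(0)$, whereas the paper takes the \emph{maximal} $n$ (call it $s$) and applies $e_\beta(s-1)$.

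That single change removes your ``main obstacle'' entirely. With shift $s-1$, the action rule \eqref{eq:action} kills every $f_\beta(-m)$ with $m<s$, and on $f_\beta(-s)$ it returns $[e_\beta,f_\beta](-1)=\beta(-1)$, which always raises $\deg_{-1}^{(0)}$ by one. Since $\beta$ has minimal height, no other bracket $[e_\beta,f_\gamma]$, $[e_\beta,e_\gamma]$, $[e_\beta,u^j]$ lands in $\h$, so \emph{every} monomial of $e_\beta(s-1).\bar w$ with $\deg_{-1}^{(0)}=d_{-1}^{(0)}(I)+1$ comes from replacing a single $f_\beta(-s)$ by $\beta(-1)$ in some $\bar w^{i'}$ with $i'\in I_{-1}^{(0)}$. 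These resulting monomials are pairwise distinct (the map $\bar w^{i'}\mapsto v^{i'}$ is visibly injective) and carry nonzero coefficients $\lambda_{i'} b^{(i')}$, giving the contradiction in one step.

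By contrast, your $e_\beta(0)$ hits \emph{every} $f_\beta(-m)$, producing $\beta(-m)$ for all $m\geq 1$; only the $m=1$ hits raise $\deg_{-1}^{(0)}$, and when your minimal $n$ is $\geq 2$ there are none. Then you are forced into the refined invariant $(\deg_{-1}^{(0)},\ \text{multiset of }f\text{-factors})$ and must rule out collisions with monomials coming from $i'\notin I_{-1}^{(0)}$ (where $e_\beta(0)$ can also act on $u^j(-1)$, \emph{lowering} $\deg_{-1}^{(0)}$ and making the filtration non-monotone). You don't carry this out, and it is genuinely more delicate than what the paper needs. The fix is not more bookkeeping; it is to shift the operator so the Cartan piece is forced into the $t^{-1}$ slot.
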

\begin{proof} Suppose the assertion is false.
Then for some positive roots $\beta_{j_1},\ldots,\beta_{j_t} \in \Delta_+$,
one can write for any $i \in I_{-1}^{(0)}$,
\begin{align}\label{e5}
(\bar w^i)^{(-)}=f_{\be_{j_1}}(-1)^{b_{j_1,1}^{(i)}}
\cdots f_{\be_{j_1}}(-s_{j_1})^{b_{j_1,s_{j_1}}^{(i)}}\cdots
f_{\be_{j_t}}(-1)^{b_{j_t,1}^{(i)}}
\cdots f_{\be_{j_t}}(-s_{j_t})^{b_{j_t,s_{j_t}}^{(i)}},
\end{align}
so that for any $l \in \{1,\ldots,t\}$,
$$\{b_{j_l,s_{j_l}}^{(i)}\colon i \in I_{-1}^{(0)}\}\neq \{0\}.$$
Set
$$
K_{-1}^{(0)} = \{i \in I_{-1}^{(0)} \colon b_{j_1,s_{j_1}}^{(i)}> 0 \}.$$
Since $\bar w$ is a singular vector of $S(t^{-1}\g[t^{-1}])$ and $s_{j_1}-1 \in\Z_{\geq 0}$,
we have $e_{\beta_{j_1}}(s_{j_1}-1) .\bar w=0$.
On the other hand, using the action of $\g[t]$ on $S(t^{-1}\g[t^{-1}])$
as described by \eqref{eq:action}, we see that
\begin{align}
\label{eq:wi_-}
0=e_{\beta_{j_1}}(s_{j_1}-1) .\bar w  =
\sum\limits_{i \in K_{-1}^{(0)}} \lambda_ib_{j_1,s_{j_1}}^{(i)} v^{i} + v,
\end{align}
where for $i \in K_{-1}^{(0)}$,
\begin{align*}
v^{i} & :=
(\bar w^{i})^{(0)} {\be_{j_1}(-1)}
f_{\be_{j_1}}(-1)^{b_{j_1,1}^{(i)}}
\cdots f_{\be_{j_1}}(-s_{j_1})^{b_{j_1,s_{j_1}}^{(i)}-1} & \\
& \qquad \quad \cdots
f_{\be_{j_t}}(-1)^{b_{j_t,1}^{(i)}}
\cdots f_{\be_{j_t}}(-s_{j_t})^{b_{j_t,s_{j_t}}^{(i)}} (w^{i})^{(+)} {\bf 1}, &
\end{align*}
and $v$ is a linear combination of monomials $x$
such that
$$\deg_{-1}^{(0)}(x) \leq d_{-1}^{(0)}(I).$$
Indeed, for $i \in K_{-1}^{(0)}$, it is clear that
\begin{align*}
e_{\beta_{j_1}}(s_{j_1}-1) .w^{i}  = b_{j_1,s_{j_1}}^{(i)} v^{i} + y^{i},
\end{align*}
where $y^{i}$ is a linear combination of monomials $y$
such that
$\deg_{-1}^{(0)}(y) \leq d_{-1}^{(0)}(I)$ because
${\rm ht}(\beta_{j_1}) \leq {\rm ht}(\beta_{j_l})$
for all $l \in \{1,\ldots,t\}$.
Next, for $i \in I_{-1}^{(0)} \setminus K_{-1}^{(0)}$,
$e_{\beta_{j_1}} (s_{j_1}-1).\bar w^{i} $ is a
linear combination of monomials $z$ such that
$\deg_{-1}^{(0)}(z) \leq d_{-1}^{(0)}(I)$ because
$b_{j_1,s_{j_1}}^{(i)}=0$.
Finally, for $i \in I \setminus I_{-1}^{(0)}$, we have $\deg_{-1}^{(0)}(\bar w^{i}) < d_{-1}^{(0)}(I) $
and, hence, $e_{\beta_{j_1}}(s_{j_1}-1) .\bar w^{i} $ is a
linear combination of monomials $z$ such that
$\deg_{-1}^{(0)}(z) \leq d_{-1}^{(0)}(I)$ as well.

Now, note that for each $i \in K_{-1}^{(0)}$,
$$\deg_{-1}^{(0)}(v^{i})=\deg_{-1}^{(0)}(\bar w^{i})+1 = d_{-1}^{(0)}(I) +1.$$
Hence by \eqref{eq:wi_-} we get a contradiction because all monomials
$v^{i}$, for $i$ running through $K_{-1}^{(0)}$, are linearly independent
while $\lambda_i b_{j_1,s_{j_1}}^{(i)} \not=0$, for $i\in K_{-1}^{(0)}$.
This concludes the proof of the lemma.
\end{proof}

\subsection{Use of Sugawara operators}
\label{sub:Sugawara operators}
Recall that $w = \sum\limits_{j \in J} \lambda_j w^j$.
Let $J_1\subseteq J$ be such that for $i\in J_1$, $(w^i)^{(-)}=1$. Then by Lemma \ref{lem:wi_-},
$$
 \emptyset \neq I_{-1}^{(0)}\subseteq J_1.$$ So $J_1\neq \emptyset.$
Set
$${d}_{-1}^{(0)}: = {d}_{-1}^{(0)} (J_1) = \max\{\deg_{-1}^{(0)}( w^i) \colon i \in J_1\},$$
and
$$J_{-1}^{(0)} := \{ i\in J_1 \colon \deg_{-1}^{(0)}(w^i)= {d}_{-1}^{(0)}\}.$$
Then $d_{-1}^{(0)}(I)\leq {d}_{-1}^{(0)}$. Set
$$d^{+} := \max\{\deg\,(w^{i})^{(+)} \colon i \in J_{-1}^{(0)} \}$$
and let
$$J^+= \{i \in J_{-1}^{(0)} \colon \deg \,( w^{i})^{(+)}= d^{+} \}\subseteq J_{-1}^{(0)}.$$
Our next aim is to show that for  $i\in J^+$,
$w^{i}$ has depth zero, whence $p=0$ since $p$ is by definition
the smallest depth of the $w^j$'s, and so
the image of $w$ in
$R_{V^k(\g)} = F^0 V^k(\g)/ F^1 V^k(\g)$ is nonzero.

This will be achieved in this paragraph through the use of
the Sugawara construction.

Recall that
by Lemma \ref{lem:Sugawara_singular_vector},
$$
L_{-1} w=\tilde{L}_{-1} w$$
since $w$ is a singular vector of $V^k(\g)$, where
\begin{align*}
{\tilde{L}_{-1}:=\dfrac{1}{k+h^\vee} \left(\sum\limits_{i=1}^{\ell}u^i(-1)u^i(0)
+\sum\limits_{\al\in{\Delta}_{+}}e_{\al}(-1)f_{\al}(0)\right)}.
\end{align*}

\begin{lem}
\label{lem:Sugawara_monomial}
Let $z$ be a PBW monomial of the form \eqref{eq:PBW_basis}.
Then $\tilde L_{-1} z$
is a linear combination of of PBW monomials
$x$ satisfying all the following conditions:
\begin{enumerate}
\item[(a)] $\deg(x^{(+)}) \leq \deg(z^{(+)})+1$ and $\deg(x^{(0)}) \leq \deg(z^{(0)})+1$,
\item[(b)] if $z^{(-)}\neq 1$, then $x^{(-)}\neq 1$.
\item[(c)] if $x^{(-)}=z^{(-)}$, then either $\deg(x^{(0)}) =\deg(z^{(0)}) +1$, or $x^{(0)} = z^{(0)}$.
\item[(d)] if $\deg(x^{(0)}) =\deg(z^{(0)}) +1$, then $x^{(-)}= z^{(-)}$ and $\deg(x^{(+)}) \leq \deg(z^{(+)})$.
\end{enumerate}
\end{lem}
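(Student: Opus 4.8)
The plan is to analyze directly how the two summands of $\tilde L_{-1}$ act on a single PBW monomial $z = z^{(+)} z^{(-)} z^{(0)} \mathbf{1}$, keeping careful track of what happens to the $(+)$, $(-)$, and $(0)$ parts after reordering the result back into PBW form. Recall $\tilde L_{-1} = \frac{1}{k+h^\vee}\left(\sum_{i=1}^\ell u^i(-1)u^i(0) + \sum_{\al\in\Delta_+} e_\al(-1)f_\al(0)\right)$. The two types of terms behave quite differently: the Cartan terms $u^i(-1)u^i(0)$ first apply $u^i(0)$, which acts by the adjoint action on each factor of $z$ without changing the partition structure (it sends $e_\be(-m)\mapsto \be(u^i)e_\be(-m)$, $f_\be(-m)\mapsto -\be(u^i)f_\be(-m)$, and annihilates $u^j(-m)$), and then multiplies by $u^i(-1)$, which adds one factor of $u^i(-1)$ to the $(0)$-part and raises $\deg(x^{(0)})$ by exactly one. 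The root terms $e_\al(-1)f_\al(0)$ first apply $f_\al(0)=e_{-\al}(0)$, which by the structure-constant conventions either converts an $e_\be(-m)$ into $e_{\be-\al}(-m)$ (possibly hitting the Cartan, producing an $h$-type factor $\be(t_\al)u(-m)$ when $\be=\al$), converts an $f_\be(-m)$ into $f_{\be+\al}(-m)$ or similar, or kills a Cartan factor $u^j(-m)$ when $[f_\al,u^j]$ is proportional to $f_\al$; then $e_\al(-1)$ appends one factor of $e_\al(-1)$, raising $\deg(x^{(+)})$ by one.

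The key step is to organize this case analysis into the four asserted bounds. For (a): applying $u^i(0)$ or $f_\al(0)$ does not increase the total degree of any homogeneous part, and the left multiplications by $u^i(-1)$ and $e_\al(-1)$ each raise exactly one part by one, giving $\deg(x^{(+)})\le\deg(z^{(+)})+1$ and $\deg(x^{(0)})\le\deg(z^{(0)})+1$; note in the Cartan term the $(+)$-part can only shrink or stay, and in the root term the $(0)$-part can only shrink or stay (apart from the case $[f_\al, e_\al]\sim\be$ producing a Cartan factor, which I must check does not exceed the bound — it replaces an $e$-factor by an $h$-factor, so $\deg(x^{(0)})$ goes up by one there too, but then $\deg(x^{(+)})$ strictly decreases, which is consistent). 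For (b): if $z^{(-)}\ne 1$, every term of $u^i(0).z$ still contains an $f$-factor (adjoint action of a Cartan element preserves the $f_\be$-factors up to scalar), and $f_\al(0).z$ acting on an $f$-factor produces $f_{\al+\be}$ (still an $f$-factor) or, acting elsewhere, leaves the existing $f$-factors intact — so $x^{(-)}\ne 1$; I must double-check the borderline case where $f_\al(0)$ acts on the unique $f$-factor and $\al+\be\notin\Delta$, in which case that term vanishes, and where $\be=\al$ so $f_\al(0).f_\al(-m)$ involves $[f_\al,f_\al]=0$ — again that contribution vanishes, so no problem. For (c): suppose $x^{(-)}=z^{(-)}$, i.e. the $(-)$-part is untouched; in the Cartan term this forces us into the branch where $u^i(-1)$ was appended and $u^i(0)$ acted only by scalars, giving $\deg(x^{(0)})=\deg(z^{(0)})+1$; in the root term, leaving $z^{(-)}$ fixed means $f_\al(0)$ acted on a $(+)$- or $(0)$-factor, and then $e_\al(-1)$ was appended — if it acted on a $(0)$-factor $u^j(-m)$, either that term is zero or it becomes $\sim e_\al$-type, decreasing $\deg(x^{(0)})$, and then appending $e_\al(-1)$... wait, this needs the case where $f_\al(0)$ acts within the $(+)$-part or is a scalar, leaving $x^{(0)}=z^{(0)}$. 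The cleanest route is: either $f_\al(0)$/$u^i(0)$ modified $z^{(0)}$ (but it can only decrease it, so combined with the appended factor we get $\deg(x^{(0)})\le \deg(z^{(0)})+1$ with equality iff nothing in $z^{(0)}$ was touched, hence $x^{(0)}=z^{(0)}$ plus one new factor), or it did not, in which case $x^{(0)}$ is either $z^{(0)}$ (root term acting purely on $(+)$, no new $(0)$-factor... but then $e_\al(-1)$ is appended to $(+)$, fine) or $z^{(0)}$ with one factor removed (killing a Cartan factor) — but the latter changes $x^{(-)}$? No. I will need to be careful here. For (d): if $\deg(x^{(0)})=\deg(z^{(0)})+1$, the extra $(0)$-factor must come from appending $u^i(-1)$ (Cartan term) with $u^i(0)$ acting only by scalars — hence $x^{(-)}=z^{(-)}$ and $\deg(x^{(+)})=\deg(z^{(+)})$ — or from the root-term branch $[f_\al,e_\al]\sim\be$, which replaces an $e_\al$-factor of $z^{(+)}$ by a Cartan-type factor and then appends $e_\al(-1)$: here $z^{(-)}$ is untouched so $x^{(-)}=z^{(-)}$, and $\deg(x^{(+)}) = \deg(z^{(+)}) - 1 + 1 = \deg(z^{(+)})$, so in fact $\deg(x^{(+)})\le\deg(z^{(+)})$ as claimed.

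The main obstacle I expect is bookkeeping rather than any deep idea: one has to enumerate all the ways $f_\al(0)$ (equivalently $e_{-\al}(0)$) can act on each of the three types of factors $e_\be(-m)$, $f_\be(-m)$, $u^j(-m)$ — producing a raising-operator factor, a lowering-operator factor, a Cartan factor, or zero — and in each branch verify all four claims simultaneously, which is a finite but somewhat intricate check that relies crucially on the height ordering ${\rm ht}(\be_i)\le{\rm ht}(\be_j)$ for $i<j$ and on the structure-constant relation \eqref{eq:structure_constant} to keep the PBW-reordering under control. I would structure the proof as: (1) write $\tilde L_{-1} z$ as a sum over the Cartan terms and the root terms; (2) for the Cartan terms, observe $u^i(0)$ acts diagonally on PBW monomials up to scalar so the whole effect is appending $u^i(-1)$, immediately giving (a)–(d) in this case; (3) for a root term $e_\al(-1)f_\al(0).z$, expand $f_\al(0).z$ by the Leibniz rule into a sum of monomials obtained by modifying one factor of $z$, classify that modification into the four sub-cases above, and in each check the degree shifts on $x^{(+)}$, $x^{(-)}$, $x^{(0)}$; (4) assemble. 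I would not grind through every sub-case in the write-up but would highlight the two that do something nontrivial (the $\be=\al$ case producing a Cartan factor, governing (d); and the action on an $f$-factor, governing (b) and (c)).
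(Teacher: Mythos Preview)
Your approach is the paper's approach: split $\tilde L_{-1}$ into its Cartan piece $\sum_i u^i(-1)u^i(0)$ and its root piece $\sum_\al e_\al(-1)f_\al(0)$, apply each via the Leibniz rule to the factors of $z$, and track the effect on the three parts $z^{(+)},z^{(-)},z^{(0)}$. The paper declares (a)--(c) ``easy to see'' and writes out only (d), giving exactly the two-branch argument you describe (either the extra Cartan factor comes from appending $u^i(-1)$ after the diagonal action of $u^i(0)$, or it comes from $[f_\al(0),e_\al(-n)]=-\al(-n)$ in the root piece, in which case the new $e_\al(-1)$ merely replaces the lost $e_\al(-n)$ so $\deg(x^{(+)})\le\deg(z^{(+)})$).

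One slip to correct: you say that $f_\al(0)$ acting on a Cartan factor $u^j(-m)$ gives something ``$\sim e_\al$-type.'' It does not: $[f_\al,u^j]=\al(u^j)f_\al$, so the result is an $f_\al(-m)$ factor. This is precisely why your (c) analysis got tangled --- with the correct sign, the leading PBW term in that branch has $x^{(-)}\neq z^{(-)}$ and (c) is vacuous there; the lower-order terms from commuting this $f_\al(-m)$ back past $e_\al(-1)z^{(+)}$ either keep $x^{(0)}=z^{(0)}$ (when the commutator lands in $\mathfrak{n}_\pm$) or produce a Cartan factor via $[e_\al,f_\al]$, giving $\deg(x^{(0)})=\deg(z^{(0)})+1$. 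Once you fix this, your sketch of (c) goes through. Finally, neither the height ordering nor the relation \eqref{eq:structure_constant} is needed for this lemma; those enter only in the finer analysis of Lemma~\ref{lem:Sugawara_bracket}.
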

\begin{proof}
Parts (a)--(c) are easy to see. We only prove (d).

Assume that $\deg(x^{(0)}) =\deg(z^{(0)}) +1$.
Either $x$ comes from the term $\sum\limits_{i=1}^{\ell} u^i(-1)u^i(0) z$,
or it comes from a term $e_{\al}(-1)f_{\al}(0)z$ for some $\al \in \Delta_+$.

If $x$ comes from the term $\sum\limits_{i=1}^{\ell} u^i(-1)u^i(0) z$, then it is obvious that
$x^{(-)}= z^{(-)}$ and $x^{(+)} = z^{(+)}$.

Assume that $x$ comes from $e_{\al}(-1)f_{\al}(0)z$ for some $\al \in \Delta_+$.
We have

\begin{align*}
e_{\al}(-1)f_{\al}(0)z 
& = e_{\al}(-1)  [f_{\al}(0), z^{(+)}]z^{(-)}z^{(0)}{\bf 1} + e_{\al}(-1)z^{(+)}[f_{\al}(0), z^{(-)}]z^{(0)} {\bf 1} & \\
& \quad + e_{\al}(-1)z^{(+)} z^{(- )}[f_{\al}(0) ,z^{(0)} ]  {\bf 1}. &
\end{align*}
Clearly, any PBW monomials $x$ from
$$e_{\al}(-1)z^{(+)}[f_{\al}(0), z^{(-)}]z^{(0)} {\bf 1} \quad \text{ or }
\quad e_{\al}(-1)z^{(+)} z^{(- )}[f_{\al}(0) ,z^{(0)} ]  {\bf 1}$$
satisfies that $\deg(x^{(0)}) \leq \deg(z^{(0)})$. Then  it is enough to consider PBW monomials in
$$e_{\al}(-1)  [f_{\al}(0), z^{(+)}]z^{(-)}z^{(0)}{\bf 1}.$$
The only possibility for a PBW monomial $x$ in $ e_{\al}(-1)  [f_{\al}(0), z^{(+)}]z^{(-)}z^{(0)}{\bf 1}$
to verify $\deg(x^{(0)}) =\deg(z^{(0)}) +1$ is that it comes from a term
$ [f_{\al}(0), e_{\al}(-n)]= - \al(-n)$ for some $n \in \Z_{>0}$, where
$e_{\al}(-n)$ is a term in $z^{(+)}$.
 But then, for PBW monomials $x$ in
 $ e_{\al}(-1)[f_{\al}(0), z^{(+)}]z^{(0)}{\bf 1}$
 such that $\deg(x^{(0)}) =\deg(z^{(0)}) +1$,
we have $x^{(-)}=z^{(-)}$ and $\deg(x^{(+)}) \leq \deg(z^{(+)})$.
\end{proof}

We now consider the action of $\tilde{L}_{-1}$  on particular PBW monomials.
\begin{lem}
\label{lem:Sugawara_bracket}
Let $z$ be a PBW monomial of the form \eqref{eq:PBW_basis}
such that $z^{(-)}=1$ and $\dep(z^{(+)})=0$, that is,
either $z^{(+)}=1$, or for some $j_1,\ldots,j_t \in \{1,\ldots,q\}$
(with possible repetitions),
\begin{align*}
z = e_{\be_{j_1}}(-1)e_{\be_{j_2}}(-1)\ldots e_{\be_{j_t}}(-1) z ^{(0)} {\bf 1}.
\end{align*}
Then $\tilde{L}_{-1}z$
 is a linear combination of PBW monomials $y$ satisfying one of the following
 conditions:
 \begin{enumerate}
\item $y^{(-)}=1$, $\dep(y^{(+)})\geq 1$, $\deg(y^{(+)})\leq \deg(z^{(+)})$,
$y^{(0)}=z^{(0)}$,
{\item $y^{(-)}=1$, $\dep(y^{(+)})=0$,  $\deg (y^{(+)})\leq \deg (z^{(+)})-1$, and
$\deg(y^{(0)})>\deg(z^{(0)})$, $\deg_{-1}^{(0)}(y)=\deg_{-1}^{(0)}(z)$,}
{\item $y^{(-)}=1$, $\dep(y^{(+)})\geq 1 $, $\deg(y^{(+)})\leq \deg(z^{(+)})-1$, and $\deg_{-1}^{(0)}(y)=\deg_{-1}^{(0)}(z)+1$,}
\item $y^{(-)}\neq 1$.
\end{enumerate}
\end{lem}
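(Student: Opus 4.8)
The plan is to run the Sugawara operator $\tilde L_{-1}$ through the explicit formula and sort the resulting PBW monomials by the invariants $\dep(y^{(+)})$, $\deg(y^{(+)})$, $\deg(y^{(0)})$, $\deg_{-1}^{(0)}(y)$, and whether $y^{(-)}=1$, exactly as in Lemma~\ref{lem:Sugawara_monomial}. Since $z^{(-)}=1$, the term $e_{\al}(-1)z^{(+)}[f_{\al}(0),z^{(-)}]z^{(0)}{\bf 1}$ in the expansion of $e_\al(-1)f_\al(0)z$ vanishes, so $\tilde L_{-1}z$ is a sum of three kinds of contributions: (i) $\sum_i u^i(-1)u^i(0)z$, (ii) $e_\al(-1)[f_\al(0),z^{(+)}]z^{(0)}{\bf 1}$, and (iii) $e_\al(-1)z^{(+)}[f_\al(0),z^{(0)}]{\bf 1}$. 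The first step is to observe that all monomials $y$ with $y^{(-)}\ne 1$ are dumped into case (4), so from now on one only tracks monomials with $y^{(-)}=1$.

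First I would dispose of contribution (i): applying $u^i(0)$ to $z=z^{(+)}z^{(0)}{\bf 1}$ preserves $z^{(0)}$ (as $\h$ is abelian) and changes $z^{(+)}$ only by a scalar from the root pairing, and then $u^i(-1)$ appends one factor $u^i(-1)$; so the resulting $y$ has $y^{(-)}=1$, $y^{(+)}$ with the same degree and depth as $z^{(+)}$ (still depth $0$), $\deg(y^{(0)})=\deg(z^{(0)})+1$, and crucially $\deg_{-1}^{(0)}(y)=\deg_{-1}^{(0)}(z)+1$. If $z^{(+)}\ne 1$ this falls into case (2) after noting $\deg(y^{(+)})=\deg(z^{(+)})$; one has to be slightly careful here, since case~(2) demands $\deg(y^{(+)})\le \deg(z^{(+)})-1$, so in fact the $u^i(-1)u^i(0)$ terms must be absorbed elsewhere — I would re-examine and see that when $\dep(y^{(+)})=0$ and $\deg(y^{(+)})=\deg(z^{(+)})$ the only way to stay consistent is that the $u^i(-1)u^i(0)$ contribution is balanced against the Cartan part of $e_\al(-1)f_\al(0)$, i.e.\ one reorganizes $\tilde L_{-1}$ so these cancel with part of contribution (ii); this bookkeeping is where care is required. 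Then contribution (iii): $[f_\al(0),z^{(0)}]$ replaces some $u^j(-n)$ by a root vector $e_{-\al}(-n)=f_\al(-n)$ with $n\ge 1$, producing a factor $f_\al(-n)$, hence $y^{(-)}\ne 1$ — case~(4) again — unless $n=1$ and one re-commutes, but in all surviving cases $\deg(y^{(0)})<\deg(z^{(0)})$ and we land in case~(1) or (4).

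The heart of the argument is contribution (ii), $e_\al(-1)[f_\al(0),z^{(+)}]z^{(0)}{\bf 1}$: $[f_\al(0),e_{\be_{j_l}}(-1)]$ is either $c_{-\al,\be_{j_l}}e_{\be_{j_l}-\al}(-1)$ (still a depth-$0$ factor of positive/negative-root type) or, when $\be_{j_l}=\al$, it equals $-\al(-1)$, a Cartan factor. In the first sub-case the new monomial keeps $y^{(-)}=1$ only if $\be_{j_l}-\al\in\Delta_+$, has $\dep(y^{(+)})=0$, $\deg(y^{(+)})=\deg(z^{(+)})$ after adjoining the extra $e_\al(-1)$ — wait, adjoining $e_\al(-1)$ raises $\deg(y^{(+)})$ by one while the commutator lowered the $e$-part by zero in degree, so $\deg(y^{(+)})=\deg(z^{(+)})+1$; to force it into cases (1)--(3) one uses that $z$ was itself a monomial with $\dep(z^{(+)})=0$ and all factors $e_{\be}(-1)$, so the extra $e_\al(-1)$ can only \emph{decrease} depth, giving $\dep(y^{(+)})=0$, and then one checks $\deg_{-1}^{(0)}(y)=\deg_{-1}^{(0)}(z)$ (no new $u(-1)$ appears), which is case~(2) provided $\deg(y^{(+)})\le\deg(z^{(+)})-1$ — again the degree bookkeeping needs the cancellation with (i) to work out. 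In the second sub-case $\be_{j_l}=\al$, the commutator yields $-\al(-1)$, so $y^{(0)}$ gains a factor $\al(-1)=\sum_m (\al|u^m)u^m(-1)$, raising $\deg_{-1}^{(0)}$ by one, while $y^{(+)}$ loses the $e_\al(-1)$ factor and gains nothing (the outer $e_\al(-1)$ is appended, so net $\deg(y^{(+)})=\deg(z^{(+)})$; but that outer $e_\al(-1)$ combined with the remaining $e$-factors may lower depth), giving precisely case~(3), or case~(1) if a reordering forces $\dep(y^{(+)})\ge 1$ with $y^{(0)}=z^{(0)}$.

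\medskip

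The main obstacle I anticipate is the degree bookkeeping in contribution (ii) together with (i): naively $\deg(y^{(+)})$ can equal $\deg(z^{(+)})+1$, which fits none of cases (1)--(4) as stated, so the proof must exploit a cancellation — the outer $e_\al(-1)$ created in (ii) together with the Cartan terms of (i) and the $[f_\al(0),e_\al(-n)]=-\al(-n)$ terms must combine so that every surviving monomial either genuinely has a $y^{(-)}\ne 1$ factor, or has its $e$-degree bounded by $\deg(z^{(+)})-1$, or has $\dep(y^{(+)})\ge 1$. Concretely I would regroup $\tilde L_{-1}z$ as: the ``diagonal'' piece $\sum_i u^i(-1)u^i(0)z + \sum_\al e_\al(-1)f_\al(-?)\cdots$ that reproduces $z$ up to lower terms, plus genuinely lower-degree remainders; the former, being essentially $L_0$-type and killing $z$'s leading behaviour, contributes only to case~(1). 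Verifying this regrouping — i.e.\ that the top-degree-in-$z^{(+)}$ part of $\tilde L_{-1}z$ with $\dep(y^{(+)})=0$ vanishes — is the crux, and it should follow from the identity in Lemma~\ref{lem:Sugawara_singular_vector} applied to the structure of $z$, or by a direct but careful commutator count using \eqref{eq:structure_constant} and the height ordering ${\rm ht}(\be_i)\le{\rm ht}(\be_j)$ for $i<j$.
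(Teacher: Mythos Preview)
Your decomposition into the three contributions (i), (ii), (iii) is correct, and you rightly spot that the naive bookkeeping produces two kinds of ``bad'' monomials not covered by cases (1)--(4): those with $\deg(y^{(+)})=\deg(z^{(+)})+1$ coming from the sub-case $\be_{j_r}-\al\in\Delta_+$ of (ii), and those with $\deg(y^{(+)})=\deg(z^{(+)})$, $\dep(y^{(+)})=0$, $\deg_{-1}^{(0)}(y)=\deg_{-1}^{(0)}(z)+1$ coming from (i). You also correctly intuit that cancellations are required and that \eqref{eq:structure_constant} is the key. But you misattribute which cancellation handles which, and this is a genuine gap.

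The degree-$(t{+}1)$ terms from (ii) \emph{cannot} cancel against (i): contribution (i) produces only monomials with $\deg(y^{(+)})\le\deg(z^{(+)})$, since $u^i(0)$ rescales $z^{(+)}$ and commuting $u^i(-1)$ through $z^{(+)}$ only lowers the $e$-degree. The actual mechanism is an internal pairing within (ii). If $\be_{j_r}=\al+\be$ with $\al,\be\in\Delta_+$, then both $\al$ and $\be$ occur in the sum $\sum_{\al\in\Delta_+}e_\al(-1)[f_\al(0),z^{(+)}]z^{(0)}{\bf 1}$, producing the pair
\[
c_{-\al,\al+\be}\,e_\al(-1)\cdots e_\be(-1)\cdots z^{(0)}{\bf 1}
\;+\;
c_{-\be,\al+\be}\,e_\be(-1)\cdots e_\al(-1)\cdots z^{(0)}{\bf 1}.
\]
By \eqref{eq:structure_constant} one has $c_{-\al,\al+\be}=-c_{-\be,\al+\be}$, so the symmetric part cancels and what survives is $c_{-\al,\al+\be}[e_\al(-1),e_\be(-1)]\cdots=c_{-\al,\al+\be}c_{\al,\be}\,e_{\al+\be}(-2)\cdots$, which has $\dep(y^{(+)})\ge1$ and $\deg(y^{(+)})\le\deg(z^{(+)})$ with $y^{(0)}=z^{(0)}$: this is case~(1). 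This is the role of \eqref{eq:structure_constant} in the paper's proof, and it is the missing idea in your proposal.

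The second cancellation, between (i) and the $\al=\be_{j_r}$ sub-case of (ii), is the one you essentially describe: the top term $z^{(+)}\be_{j_r}(-1)z^{(0)}{\bf 1}$ from (i) cancels against $-e_{\be_{j_r}}(-1)\cdots\be_{j_r}(-1)\cdots z^{(0)}{\bf 1}$ from $[f_{\be_{j_r}}(0),e_{\be_{j_r}}(-1)]=-\be_{j_r}(-1)$. The remainder consists of commutator terms that all have $\dep(y^{(+)})\ge1$; those keeping $y^{(0)}=z^{(0)}$ go to case~(1), and those gaining a $u^j(-1)$ factor (hence $\deg_{-1}^{(0)}(y)=\deg_{-1}^{(0)}(z)+1$) have $\deg(y^{(+)})\le\deg(z^{(+)})-1$ and go to case~(3).

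Finally, your suggestion to invoke Lemma~\ref{lem:Sugawara_singular_vector} is misplaced: $z$ is an arbitrary PBW monomial, not a singular vector, so that lemma says nothing here. The height ordering is likewise not used in this lemma (it enters in Lemma~\ref{lem:wi_-}).
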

\begin{proof}
First, we have
\begin{align*}
\sum\limits_{i=1}^{\ell} u^i(-1)u^i(0) z &
=   \sum_{r=1}^t
e_{\be_{j_1}}(-1) \ldots [\sum\limits_{i=1}^{\ell}u^i(-1)u^i(0), e_{\be_{j_r}}(-1)] \ldots e_{\be_{j_t}}(-1)  z^{(0)} {\bf 1},&
\end{align*}
and
\begin{align*}
 [\sum\limits_{i=1}^{\ell}u^i(-1)u^i(0), e_{\be_{j_r}}(-1)] & = \sum\limits_{i=1}^{\ell}\left(u^i(-1)  [u^i(0), e_{\be_{j_r}}(-1)] + [u^i(-1), e_{\be_{j_r}}(-1)] u^i(0) \right)&\\
 & = \be_{j_r}(-1) e_{\be_{j_r}}(-1) + e_{\be_{j_r}}(-2) \be_{j_r}(0) .&
\end{align*}
So
\begin{align}\label{e37}
& \sum\limits_{i=1}^{\ell} u^i(-1)u^i(0) z &\\  \nonumber
& = \sum_{r=1}^t
e_{\be_{j_1}}(-1) \ldots (\be_{j_r}(-1) e_{\be_{j_r}}(-1) + e_{\be_{j_r}}(-2) \be_{j_r}(0)) \ldots e_{\be_{j_t}}(-1)  z^{(0)} {\bf 1}.
\end{align}
Second, we have
\begin{align*}
\sum\limits_{\al\in{\Delta}_+}e_{\al}(-1) f_{\al}(0) z &
= \sum\limits_{\al\in{\Delta}_+}  \sum_{r=1}^te_{\al}(-1)
e_{\be_{j_1}}(-1) \ldots [f_{\al}(0)  ,e_{\be_{j_r}}(-1)] \ldots e_{\be_{j_t}}(-1) z^{(0)}  {\bf 1} & \\
 & \qquad +  \sum\limits_{\al\in{\Delta}_+} e_{\al}(-1)e_{\be_{j_1}}(-1)e_{\be_{j_2}}(-1)\ldots e_{\be_{j_t}}(-1)
 [f_{\al}(0),z^{(0)} ] {\bf 1} .&
\end{align*}
It is clear that
 any PBW monomial $y$ in
$$\sum\limits_{\al\in{\Delta}_+}e_{\al}(-1) e_{\be_{j_1}}(-1)e_{\be_{j_2}}(-1)\ldots e_{\be_{j_t}}(-1)
 [f_{\al}(0),z^{(0)} ] {\bf 1}$$ verifies that
 \begin{align}\label{e38}
 y^{(-)} \not= 1.
\end{align}
We now consider
$$u_r:=\sum\limits_{\al\in{\Delta}_+}
e_{\al}(-1)e_{\be_{j_1}}(-1) \ldots [f_{\al}(0)  ,e_{\be_{j_r}}(-1)] \ldots e_{\be_{j_t}}(-1) z^{(0)}  {\bf 1}, \text{ for } 1\leq r\leq t.$$

$\ast$ If $\be_{j_r}=\al+\be$ for some $\al,\be\in\Delta_{+}$, then  there is a partial sum  of two terms in $u_r$:
\begin{align*}
& c_{-\al,\al+\be}e_{\al}(-1)e_{\be_{j_1}}(-1) \ldots e_{\be}(-1)\ldots e_{\be_{j_t}}(-1) z^{(0)}  {\bf 1} &\\ & +c_{-\be,\al+\be}e_{\be}(-1)e_{\be_{j_1}}(-1) \ldots e_{\al}(-1)\ldots e_{\be_{j_t}}(-1) z^{(0)}  {\bf 1}.
\end{align*}

Rewriting  the above sum to a linear combination of PBW monomials,  and noticing that
$$
c_{-\al,\al+\be}e_{\al}(-1)e_{\be}(-1)+c_{-\be,\al+\be}e_{\be}(-1)e_{\al}(-1)=c_{-\al,\al+\be}c_{\al,\be}e_{\al+\be}(-2),
$$
due to \eqref{eq:structure_constant},
we deduce that it is  a linear combination of PBW monomials  $y$ such that
\begin{align}\label{e43}y^{(-)}=z^{(-)}=1, \ y^{(0)}=z^{(0)}, \ \dep(y^{(+)})\geq 1, \ \deg(y^{(+)})\leq \deg(z^{(+)}),
\end{align}
where $c_{-\al,\al+\be}, c_{-\be,\al+\be}, c_{\al,\be}\in {\mathbb R}^*$.

$\ast$ If $\al-\be_{j_r}\in\Delta_+$ for some $\al\in\Delta_+$, then there is a term in $u_r$:
\begin{align}\label{e35}
c_{-\al,\be_{j_r}}e_{\al}(-1)e_{\be_{j_1}}(-1) \ldots e_{\be_{j_{r-1}}}(-1) f_{\al-\be_{j_r}}(-1)e_{\be_{j_{r+1}}}(-1) \ldots e_{\be_{j_t}}(-1) z^{(0)}  {\bf 1}.
\end{align}
It is easy to see  that (\ref{e35}) is a linear combination of PBW monomials $y$ such that $y$ satisfies one of the following:
\begin{align}\label{e39}
& y^{(-)}=1, \ \dep(y^{(+)})\geq 1, \ \deg(y^{(+)})\leq \deg(z^{(+)}), \ y^{(0)}=z^{(0)},&\\
& y^{(-)}=1, \ \dep(y^{(+)})=0, \ \deg(y^{(+)})\leq \deg(z^{(+)})-1, &\\ \nonumber & \deg(y^{(0)})>\deg(z^{(0)}), \ \deg_{-1}^{(0)}(y)=\deg_{-1}^{(0)}(z),&\\
& y^{(-)}\neq 1.
\end{align}
Notice also that with $\al=\be_{j_r}$,  there is a term in $u_r$:
\begin{align*}
-e_{\be_{j_r}}(-1)e_{\be_{j_1}}(-1) \ldots e_{\be_{j_{r-1}}}(-1)
\be_{j_r}(-1)e_{\be_{j_{r+1}}}(-1) \ldots e_{\be_{j_t}}(-1) z^{(0)}  {\bf 1}.
\end{align*}
Together with (\ref{e37}), we see that
\begin{align*}
& \sum\limits_{i=1}^{\ell} u^i(-1)u^i(0) z+\sum\limits_{r=1}^te_{\be_{j_r}}(-1)e_{\be_{j_1}}(-1)\ldots [f_{\be_{j_r}}(0), e_{\be_{j_r}}(-1)]\ldots \ldots e_{\be_{j_t}}(-1) z^{(0)}  {\bf 1}&\\
=& \sum_{r=1}^t
e_{\be_{j_1}}(-1) \ldots (\be_{j_r}(-1) e_{\be_{j_r}}(-1) + e_{\be_{j_r}}(-2) \be_{j_r}(0)) \ldots e_{\be_{j_t}}(-1)  z^{(0)} {\bf 1}&\\
& -\sum\limits_{r=1}^t\sum\limits_{s=1}^{r-1}e_{\be_{j_1}}(-1) \ldots [e_{\be_{j_r}}(-1), e_{\be_{j_s}}(-1)]\ldots e_{\be_{j_{r-1}}}(-1)\be_{j_r}(-1)e_{\be_{j_{r+1}}}(-1) \ldots e_{\be_{j_t}}(-1) z^{(0)}  {\bf 1}&\\
& -\sum\limits_{r=1}^te_{\be_{j_1}}(-1) \ldots e_{\be_{j_{r-1}}}(-1)e_{\be_{j_r}}(-1)\be_{j_r}(-1)e_{\be_{j_{r+1}}}(-1) \ldots e_{\be_{j_t}}(-1) z^{(0)}  {\bf 1}&
\end{align*}
is a linear combination of PBW monomials $y$ satisfying one of the following:
\begin{align}
\label{e40}
& y^{(-)}=1, \ \dep(y^{(+)})\geq 1, \deg(y^{(+)})\leq \deg(z^{(+)}), \ y^{(0)}=z^{(0)}, \\\label{e41}
& y^{(-)}=1, \ \dep(y^{(+)})\geq 1, \deg(y^{(+)})\leq \deg(z^{(+)})-1, \\
& \ \deg_{-1}^{(0)}(y)=\deg_{-1}^{(0)}(z)+1. \nonumber
\end{align}
Then the lemma follows from (\ref{e38}), (\ref{e43}), (\ref{e39})--(\ref{e41}).
\end{proof}

\begin{lem}\label{Sugawara1}
Let $z$ be a PBW monomial of the form \eqref{eq:PBW_basis}
such that $z^{(-)}=1$. Then
\begin{align*}
& \tilde{L}_{-1}z
= c z^{(+)}(\gamma-\sum\limits_{j=1}^qa_{j,1}\be_j)(-1)z^{(0)}+y^1,
\end{align*}
where $c$ is a nonzero constant,
$\gamma=\sum\limits_{j=1}^q\sum\limits_{s=1}^{r_j}a_{j,s}\be_j$, and
$y^1$ is a linear combination of PBW monomials $y$ such that
$$
\deg_{-1}^{(0)}(y)=\deg_{-1}^{(0)}(z)+1, \ \deg(y^{(+)})\leq \deg(z^{(+)})-1,
$$
or
$$\deg_{-1}^{(0)}(y)\leq \deg_{-1}^{(0)}(z).$$
\end{lem}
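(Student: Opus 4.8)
The strategy is to compute $\tilde L_{-1}z$ explicitly when $z^{(-)}=1$, isolating the single PBW monomial that realizes the maximal value $\deg_{-1}^{(0)}(z)+1$ together with the maximal value of $\deg(z^{(+)})$. Write $z = z^{(+)}z^{(0)}\mathbf 1$ with $z^{(+)}$ a product of terms $e_{\be_j}(-m)$ and $z^{(0)}$ a product of terms $u^i(-m)$. First I would split $\tilde L_{-1}z$ according to its two defining summands, $\sum_i u^i(-1)u^i(0)z$ and $\sum_{\al\in\Delta_+}e_\al(-1)f_\al(0)z$, and in each summand commute the annihilation-type operators $u^i(0)$, respectively $f_\al(0)$, to the right through $z^{(+)}z^{(0)}$ until they hit $\mathbf 1$ and die. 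Since $u^i(0)z^{(0)}\mathbf 1 = 0$ and $f_\al(0)z^{(0)}\mathbf 1$ lowers $\deg(z^{(0)})$ or produces a term with $y^{(-)}\ne 1$ (hence with $\deg_{-1}^{(0)}(y)\le\deg_{-1}^{(0)}(z)$, after normal reordering), the only contributions that can raise $\deg_{-1}^{(0)}$ are those in which the bracket with a term $e_{\be_j}(-m)$ of $z^{(+)}$ produces a Cartan element $\be_j(\cdot)$.

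The key computation is the commutator identities already used in the proof of Lemma \ref{lem:Sugawara_bracket}: from $\sum_i[u^i(-1)u^i(0), e_{\be_j}(-m)] = \be_j(-1)e_{\be_j}(-m) + e_{\be_j}(-1-m)\be_j(0)$ and $[f_{\be_j}(0), e_{\be_j}(-m)] = -\be_j(-m)$ (using $[e_{\be_j},f_{\be_j}]=\be_j$), the ``diagonal'' contributions of the two summands combine so that the coefficient of the term where a $\be_j(-1)$ gets produced and multiplied onto the rest of $z^{(+)}z^{(0)}$ is, up to a fixed nonzero scalar, $a_{j,1}$ from the first summand (bracketing against the $e_{\be_j}(-1)$ factors) minus the total multiplicity-weighted count, which after collecting gives the stated $\gamma - \sum_{j}a_{j,1}\be_j$ with $\gamma = \sum_{j,s}a_{j,s}\be_j$ being the $\h$-weight of $z^{(+)}$. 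Every other monomial arising either comes from bracketing two raising operators $e_\al(-1)$, $e_{\be_j}(-m)$ together (this lowers $\deg(z^{(+)})$ by one while possibly raising $\deg_{-1}^{(0)}$ by one — the first alternative for $y^1$), or from $e_\al(-1)[f_\al(0),e_{\be_j}(-m)]$ with $\al\ne\be_j$ (which either keeps $\deg_{-1}^{(0)}$ unchanged with $\dep(y^{(+)})\ge 1$, or produces $y^{(-)}\ne 1$, i.e. the second alternative), or from $f_\al(0)$ acting on $z^{(0)}$ (which strictly lowers $\deg(z^{(0)})$, hence also $\deg_{-1}^{(0)}$). In all of these $\deg_{-1}^{(0)}(y)\le\deg_{-1}^{(0)}(z)$, except the first case which gives $\deg_{-1}^{(0)}(y)=\deg_{-1}^{(0)}(z)+1$ with $\deg(y^{(+)})\le\deg(z^{(+)})-1$, exactly as claimed.

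The main obstacle is the bookkeeping: one must be careful that reordering the commutators into genuine PBW normal form (using the structure constants, as in \eqref{eq:structure_constant}) does not accidentally generate further terms with $\deg_{-1}^{(0)}(y) = \deg_{-1}^{(0)}(z)+1$ and $\deg(y^{(+)}) = \deg(z^{(+)})$ beyond the single distinguished term $z^{(+)}(\gamma-\sum_j a_{j,1}\be_j)(-1)z^{(0)}$. This is controlled by observing that whenever a new $\be_j(-1)$ is created, it is created by brackets that either consume one $e$-factor of $z^{(+)}$ (lowering $\deg(z^{(+)})$) or convert an $e_{\be_j}(-1)$ in place into $\be_j(-1)e_{\be_j}(-1)$ — but the latter does not lower $\deg(z^{(+)})$ and contributes precisely to the coefficient $\gamma-\sum_j a_{j,1}\be_j$ once all such terms across both summands are added (the $e_{\be_j}(-1-m)\be_j(0)$ piece dies on $\mathbf 1$ because $z^{(-)}=1$ and $\be_j(0)z^{(0)}\mathbf 1 = 0$). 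Collecting these and invoking Lemma \ref{lem:Sugawara_bracket} for the residual terms then yields the statement; the nonzero constant $c$ is the common normalization $1/(k+h^\vee)$ times a nonzero integer combination of the fixed scalars.
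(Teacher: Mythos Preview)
Your approach is correct and is exactly what the paper intends --- its own proof reads in full ``since the proof is similar to that of Lemma~\ref{lem:Sugawara_bracket}, we left the verification to the reader,'' and you have carried out precisely that computation.

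Two minor bookkeeping slips, neither of which breaks the argument: (i) the piece $e_{\be_j}(-1-m)\be_j(0)$ does \emph{not} literally die on $\mathbf 1$, because $\be_j(0)$ still acts on the remaining $e$-factors of $z^{(+)}$ to its right (multiplying by their weight); however, the resulting monomials satisfy $\deg_{-1}^{(0)}(y)=\deg_{-1}^{(0)}(z)$ and hence land in $y^1$ anyway; (ii) your attribution of the two summands' contributions to the leading coefficient is reversed --- it is $\sum_i u^i(-1)u^i(0)$ that contributes the full weight $\gamma$ (every factor $e_{\be_j}(-m)$, for all $m$, yields a $\be_j(-1)$ via $\be_j(-1)e_{\be_j}(-m)$), while the diagonal part of $\sum_\al e_\al(-1)f_\al(0)$ contributes $-\sum_j a_{j,1}\be_j$ (only hitting an $e_{\be_j}(-1)$ factor produces a depth-zero Cartan $\be_j(-1)$).
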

\begin{proof}
Since the proof is similar to that of Lemma \ref{lem:Sugawara_bracket},
we left the verification to the reader.
\end{proof}

{
\begin{lem}\label{lem:wi_+}
 For  $i\in J^+$, we have that $\dep((w^i)^{(+)})=0$.
\end{lem}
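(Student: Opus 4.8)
The plan is to argue by contradiction: suppose some $i\in J^+$ has $\dep((w^i)^{(+)})\ge 1$, and contradict the identity $L_{-1}w=\tilde L_{-1}w$ of Lemma~\ref{lem:Sugawara_singular_vector}. Call a PBW monomial $y$ \emph{leading} if $y^{(-)}=1$, $\deg_{-1}^{(0)}(y)=d_{-1}^{(0)}+1$, and $\deg(y^{(+)})=d^+$. I will show that $L_{-1}w$ contains no leading monomial, whereas $\tilde L_{-1}w$ does, which is the contradiction sought.

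For the left-hand side: since $k+h^\vee\neq 0$ we have $L_{-1}=T$, and by Lemma~\ref{lem:sugawara-vs-g} it acts on PBW monomials as a derivation with $[L_{-1},x(-n)]=n\,x(-n-1)$ and $L_{-1}{\bf 1}=0$. Hence $L_{-1}w^j$ is, after harmlessly recollecting commuting modes of a fixed root vector, a linear combination of PBW monomials, each obtained from $w^j$ by lowering a single mode by one. Such an operation never turns a nontrivial $(-)$-part into the trivial one, and never raises $\deg_{-1}^{(0)}$ — it leaves it unchanged unless the lowered mode is some $u^l(-1)$, in which case it drops by one. Therefore every PBW monomial $y$ occurring in $L_{-1}w=\sum_{j\in J}\lambda_j L_{-1}w^j$ with $y^{(-)}=1$ comes from a $w^j$ with $j\in J_1$, and then $\deg_{-1}^{(0)}(y)\le\deg_{-1}^{(0)}(w^j)\le d_{-1}^{(0)}$. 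So $L_{-1}w$ has no leading monomial.

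For the right-hand side $\tilde L_{-1}w=\sum_{j\in J}\lambda_j\tilde L_{-1}w^j$: if $j\notin J_1$, Lemma~\ref{lem:Sugawara_monomial}(b) shows every PBW monomial of $\tilde L_{-1}w^j$ has nontrivial $(-)$-part, hence is not leading. If $j\in J_1$, Lemma~\ref{Sugawara1} gives $\tilde L_{-1}w^j=c_j\,(w^j)^{(+)}\nu^{(j)}(-1)(w^j)^{(0)}+y^1_j$ with $c_j\neq 0$, where $\nu^{(j)}=\gamma^{(j)}-\sum_l a^{(j)}_{l,1}\be_l\in\h$ is the total weight carried by the modes of $(w^j)^{(+)}$ of depth $\ge 1$; thus $\nu^{(j)}\neq 0$ exactly when $\dep((w^j)^{(+)})\ge 1$, and in that case the term $(w^j)^{(+)}\nu^{(j)}(-1)(w^j)^{(0)}$ is already a sum of PBW monomials having trivial $(-)$-part, $\deg_{-1}^{(0)}$ equal to $\deg_{-1}^{(0)}(w^j)+1$, and $(+)$-part equal to $(w^j)^{(+)}$. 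Comparing the degree constraints of Lemma~\ref{Sugawara1} on the main term and on $y^1_j$ across the cases $j\in J_1\setminus J_{-1}^{(0)}$, $j\in J_{-1}^{(0)}\setminus J^+$, and $j\in J^+$ (split further by whether $\dep((w^j)^{(+)})$ vanishes), one finds that the leading part of $\tilde L_{-1}w$ is precisely $\sum_j\lambda_j c_j\,(w^j)^{(+)}\nu^{(j)}(-1)(w^j)^{(0)}$, the sum running over those $j\in J^+$ with $\nu^{(j)}\neq 0$, a set nonempty by our assumption.

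It remains to see that this expression is nonzero, which is the key point. Group its summands by their common $(+)$-part $E=(w^j)^{(+)}$; then $\nu^{(j)}$ depends only on $E$, say $\nu^{(j)}=\nu_E\neq 0$, and within each group the $(0)$-parts $(w^j)^{(0)}$ are pairwise distinct PBW monomials, since the $w^j$ are distinct with common trivial $(-)$-part. As the Cartan modes $u^l(-n)$ pairwise commute, the subalgebra they generate is a polynomial algebra, hence an integral domain, so $Q_E:=\sum_j\lambda_j c_j(w^j)^{(0)}\neq 0$, and multiplication by the nonzero degree-one element $\nu_E(-1)$ keeps it nonzero; since the $e$-modes of $E$ precede the $u$-modes in the PBW order, $E\cdot\nu_E(-1)Q_E\neq 0$ as well. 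Monomials belonging to distinct groups have distinct $(+)$-parts, so there is no cancellation between groups, and the total is nonzero. Thus $\tilde L_{-1}w$ contains a leading monomial, contradicting the analysis of $L_{-1}w$; hence $\dep((w^i)^{(+)})=0$ for every $i\in J^+$. The main obstacle is this non-vanishing argument: one must verify that the three ``leading'' statistics pick out exactly the main terms supplied by Lemma~\ref{Sugawara1}, and that these main terms do not cancel against one another.
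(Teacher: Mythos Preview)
Your proof is correct and follows essentially the same approach as the paper: both compare the ``leading'' PBW monomials (those with $y^{(-)}=1$, $\deg_{-1}^{(0)}(y)=d_{-1}^{(0)}+1$, $\deg(y^{(+)})=d^+$) on the two sides of $L_{-1}w=\tilde L_{-1}w$, using Lemma~\ref{lem:sugawara-vs-g} for the left and Lemmas~\ref{lem:Sugawara_monomial}(b), \ref{Sugawara1} for the right, to conclude that $\sum_{i\in J^+}\lambda_i(w^i)^{(+)}\nu_i(-1)(w^i)^{(0)}=0$ and hence each $\nu_i=0$. Your write-up is in fact more complete than the paper's at the final step: the paper asserts directly that the vanishing of this sum forces each $\nu_i=0$, whereas you supply the missing non-cancellation argument (grouping by common $(+)$-part $E$, observing $\nu_i=\nu_E$ depends only on $E$, and using that the Cartan subalgebra generated by the $u^l(-n)$ is a polynomial ring).
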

\begin{proof} First we have
$$
w = \sum\limits_{j \in J^+} \lambda_j w^j+\sum\limits_{j \in J_{-1}^{(0)}\setminus J^+} \lambda_j w^j+\sum\limits_{j \in J_1\setminus J_{-1}^{(0)}} \lambda_j w^j+\sum\limits_{j \in J\setminus J_1} \lambda_j w^j.
$$
Then by (b) of Lemma \ref{lem:Sugawara_monomial} and Lemma \ref{Sugawara1}, we have
\begin{align*}
& (k+h^{\vee})\tilde{L}_{-1}w &\\
=& \sum\limits_{i\in J^+}(w^i)^{(+)} {(\gamma_i-\sum\limits_{j=1}^q
a_{j,1}^{(i)}\be_j)(-1)}(w^i)^{(0)}+\sum\limits_{i\in J_1\setminus J^+}(w^i)^{(+)}(\gamma_i-\sum\limits_{j=1}^qa^{(i)}_{j1}\be_j)(-1)(w^i)^{(0)}+y^1,
\end{align*}
where
{$\gamma_i=\sum\limits_{j=1}^q\sum\limits_{s=1}^{r^{(i)}_j}a^{(i)}_{j,s}\be_i$},
for $i\in J_1$, and $y^1$ is a linear combination of PBW monomials $y$ satisfying
one of the following conditions:
\begin{align*}
& \deg_{-1}^{(0)}(y)=d_{-1}^{(0)}+1, \ \deg(y^{(+)})\leq d^+-1,\\
& \deg_{-1}^{(0)}(y)\leq d_{-1}^{(0)},\\
& y^{(-)}\neq 1. 
\end{align*}
 On the other hand,  by Lemma \ref{lem:Sugawara_singular_vector}
 $$L_{-1}w=\tilde{L}_{-1}w.$$
By Lemma \ref{lem:sugawara-vs-g}, there is no PBW monomial $y$ in $L_{-1}w$ such that $\deg(y^{(+)})=d^+$, $y^{(-)}=1$, and $\deg_{-1}^{(0)}(y)=d_{-1}^{(0)}+1$. Then we deduce that
$$
\sum\limits_{i\in J^+}(w^i)^{(+)}{(\gamma_i-\sum\limits_{j=1}^qa_{j,1}^{(i)}\be_j)(-1)}(w^i)^{(0)}=0,
$$
which means that ${(\gamma_i-\sum\limits_{j=1}^qa_{j,1}^{(i)}\be_j)}=0$, for $i\in J^+$, that is, $\dep((w^i)^{(+)})=0$.
\end{proof}}

As explained at the beginning of \S\ref{sub:Sugawara operators}, Theorem
\ref{Th:main} will be a consequence of the following lemma.

\begin{lem}
\label{lem:w_0}
For each $i\in J^+$, we have $\dep(w^{i})=0$.
\end{lem}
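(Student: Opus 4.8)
The strategy is to iterate the technical results already established for the decomposition $w = \sum_{j\in J}\lambda_j w^j$. By Lemma~\ref{lem:wi_-} we know that $I_{-1}^{(0)}\subseteq J_1$, so $J_1$, $J_{-1}^{(0)}$ and $J^+$ are all nonempty, and by Lemma~\ref{lem:wi_+} we already know $\dep((w^i)^{(+)})=0$ for $i\in J^+$. Thus, for $i\in J^+$ the ``$+$ part'' and the ``$-$ part'' of $w^i$ contribute nothing to the depth; it remains only to show the ``$0$ part'' $(w^i)^{(0)}$ has depth zero as well, i.e.\ that $(w^i)^{(0)}$ involves only the generators $u^m(-1)$. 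The key input will again be the identity $L_{-1}w = \tilde L_{-1}w$ from Lemma~\ref{lem:Sugawara_singular_vector}, combined with the structural bookkeeping of Lemma~\ref{lem:Sugawara_monomial} (especially parts (c) and (d)) and Lemma~\ref{lem:Sugawara_bracket}.

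\textbf{Key steps.} First I would single out, among the monomials $w^i$ with $i\in J^+$, those whose $(w^i)^{(0)}$ has maximal depth; call this maximal depth $\delta$ and the corresponding index subset $J^{++}$. The goal is to show $\delta=0$. Arguing by contradiction, assume $\delta\geq 1$. Then for $i\in J^{++}$, $(w^i)^{(0)}$ contains some factor $u^m(-n)$ with $n\geq 2$; choose $n$ maximal among all such factors occurring for $i\in J^{++}$, and apply a suitable raising operator $u^m(n-1)$ (or more precisely examine the coefficient of an appropriate monomial) to $L_{-1}w=\tilde L_{-1}w$. On the left-hand side, $L_{-1}=T$ raises depth by exactly one (by \eqref{eq:translation}) and acts very transparently on PBW monomials via Lemma~\ref{lem:sugawara-vs-g}: $T$ applied to $w^i$ replaces each factor $x(-m)$ by $m\,x(-m-1)$, so it produces, among others, the monomial obtained from $w^i$ by raising the chosen $u^m(-n)$ to $u^m(-n-1)$, with a nonzero coefficient. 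On the right-hand side, by Lemma~\ref{lem:Sugawara_monomial}(a)--(d) together with Lemma~\ref{lem:Sugawara_bracket}, the action of $\tilde L_{-1}$ on $w^i$ for $i\in J_1$ either keeps $\deg((w^i)^{(0)})$ the same while raising $\deg((w^i)^{(+)})$ or the depth, or raises $\deg((w^i)^{(0)})$ by one at the cost of lowering $\deg((w^i)^{(+)})$; tracking the $\deg_{-1}^{(0)}$, $\deg(\cdot^{(+)})$ and the top-depth factor of $\cdot^{(0)}$ one shows that no term on the right can match the specific monomial produced on the left, forcing its coefficient to vanish and hence forcing $\delta=0$, a contradiction. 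Once $\delta=0$ we combine this with $\dep((w^i)^{(+)})=0$ (Lemma~\ref{lem:wi_+}) and $(w^i)^{(-)}=1$ (which holds for $i\in J^+\subseteq J_1$) to conclude $\dep(w^i)=0$ for $i\in J^+$.

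\textbf{Main obstacle.} The delicate point is the matching argument on the right-hand side: $\tilde L_{-1}$ is a quadratic expression in the currents and, unlike $T=L_{-1}$, it genuinely mixes the $(+)$, $(-)$ and $(0)$ parts, so one has to be careful that after rewriting $\tilde L_{-1}w^i$ as a linic combination of PBW monomials, no cancellation or recombination reintroduces the target monomial for some $i\in J_1\setminus J^{++}$ or even for $i\in J^{++}$ itself through a lower-order term. The invariants $d_{-1}^{(0)}$, $d^+$ and $\delta$, chosen maximal in that order, are precisely what make this work: they pin down a lexicographically extremal monomial that can only arise from $T$ and never from $\tilde L_{-1}$. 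Verifying this requires a careful case analysis entirely parallel to the proof of Lemma~\ref{lem:Sugawara_bracket}, keeping track of which of conditions (a)--(d) of Lemma~\ref{lem:Sugawara_monomial} can hold simultaneously; I expect the bulk of the work, and the only real subtlety, to lie there, while the left-hand side computation via Lemma~\ref{lem:sugawara-vs-g} is immediate.
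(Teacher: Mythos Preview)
Your proposal is correct and follows essentially the same route as the paper's proof: argue by contradiction, pick an extremal factor $u^m(-n)$ with $n\ge 2$ inside $(w^i)^{(0)}$, and compare the coefficient of the monomial obtained by $u^m(-n)\mapsto u^m(-n-1)$ on both sides of $L_{-1}w=\tilde L_{-1}w$, using Lemma~\ref{lem:sugawara-vs-g} on the left and the case analysis furnished by Lemmas~\ref{lem:Sugawara_monomial} and~\ref{lem:Sugawara_bracket} on the right.

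The only difference is organizational: the paper does not pass through an intermediate set $J^{++}$ of indices with maximal $\dep((w^i)^{(0)})$; it takes directly $m_1=\max_j m_j$ over all $i\in J^+$ (the largest mode occurring in any $(w^i)^{(0)}$) and then, among those $i$, the one with largest multiplicity $c^{(i)}_{1,m_1}$. This single lexicographic choice already isolates a target monomial $v^1$ with $(v^1)^{(-)}=1$, $\deg((v^1)^{(+)})=d^+$, $\deg_{-1}^{(0)}(v^1)=d_{-1}^{(0)}$ and a new top factor $u^1(-m_1-1)$, and the case split is then exactly over $J^+$, $J_{-1}^{(0)}\setminus J^+$, $J_1\setminus J_{-1}^{(0)}$, $J\setminus J_1$. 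Your extra filtration by $\delta$ is harmless but unnecessary: since the paper's $m_1$ is taken maximal over all of $J^+$, the target monomial is automatically out of reach of every other $w^j$ on both sides, so the depth refinement is subsumed. Either ordering of the extremality criteria leads to the same contradiction.
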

\begin{proof}
By definition, for $i\in J^+$,  $(w^i)^{(0)}=1$. Moreover, by Lemma \ref{lem:wi_+},
$\dep((w^i)^{(+)})=0$. Hence it suffices to prove that for $i\in J^+$,
$$(w^{i})^{(0)}=u^{1}(-1)^{c^{(i)}_{1,1}}\cdots u^\ell (-1)^{c^{(i)}_{\ell,1}}.$$
Suppose the contrary. Then
there exists $i \in J^+$ such that
\begin{align*}
w^i = & e_{\be_1}(-1)^{a^{(i)}_{1,1}}\cdots e_{\be_{q}}(-1)^{a^{(i)}_{q,1}}  u^{1}(-1)^{c^{(i)}_{1,1}}
\cdots u^{1}(-m_{1})^{c^{(i)}_{1,m_1}} & \\
& \qquad \quad
\cdots u^\ell (-1)^{c^{(i)}_{\ell,1}} \cdots u^\ell(-m_{\ell})^{c^{(i)}_{\ell,m_\ell}} {\bf 1},
\end{align*}
with at least one of the $m_j$'s, for $j=1,\ldots,\ell$, strictly greater than $1$
and $c^{(i)}_{j,m_j}  \not=0$ for such a $j$.
Without loss of generality,
one may assume that $1 \in J^+$, that
 $$m_1=\max\{ m_j\colon j=1,\ldots,\ell\} \quad \text{ and that }
 \quad 0\neq c^{(1)}_{1,m_1} \geq c^{(i)}_{1,m_1},  \text{ for } i\in J^+.$$
Writing $L_{-1}w$ as
$$L_{-1}w=\sum\limits_{i \in J^+ }L_{-1}w^i
+\sum\limits_{i\in J_{-1}^{(0)}\setminus J^+}L_{-1}w^i
+\sum\limits_{i\in J_1\setminus J_{-1}^{(0)}}L_{-1}w^i
+\sum\limits_{i \in J \setminus J_1}L_{-1}w^i,
$$
we see by Lemma \ref{lem:sugawara-vs-g} that
\begin{align}\label{e17}
L_{-1}w=\lambda_1 m_1c^{(1)}_{1,m_1} v^1+\sum\limits_{i\in J^+,i\neq 1}\lambda_im_1c^{(i)}_{1,m_1}v^i+v
+v',
\end{align}
where for $i\in J^+$, $v^{i}$ is the PBW monomial defined by:
\begin{align}\label{e14}
(v^i)^{(-)}=(w^i)^{(-)}=1,
\end{align}
\begin{align}
\label{e13}
(v^i)^{(+)}=(w^i)^{(+)}=e_{\be_1}(-1)^{a^{(i)}_{1,1}}\cdots e_{\be_{q}}(-1)^{a^{(i)}_{q,1}},
\end{align}
\begin{align}\label{e33}
(v^i)^{(0)} & = u^{1}(-1)^{c^{(i)}_{1,1}}\cdots
  u^{1}(-m_{1})^{c^{(i)}_{1,m_1}-1}u^{1}(-m_{1}-1)\cdots  u^\ell (-m_{\ell})^{c^{(i)}_{\ell,m_\ell}},&
\end{align}
and so, by definition of $J^+ \subset J_{-1}^{(0)}$,
\begin{align}\label{e34}
\deg_{-1}^{(0)}(v^i)=d_{-1}^{(0)},
\end{align}
$v$ is a linear combination of PBW monomials $x$ such that
$$
x^{(0)}=u^{1}(-1)^{c^{(x)}_{1,1}}\cdots u^{1}(- n_{1}^{(x)})^{c^{(x)}_{1,n_{1}^{(x)}}}
\cdots u^\ell (-1)^{c^{(x)}_{\ell,1}}\cdots u^\ell(-n_{\ell}^{(x)})^{c^{(x)}_{\ell,n_{\ell}^{(x)}}}
$$
and either,
\begin{align*}
n_{1}^{(x)} \leq m_1,
\end{align*}
or
\begin{align*}
& \deg(x^{(+)}) \leq d^+ -1,
\end{align*}
or
\begin{align*}
& \deg_{-1}^{(0)}(x )\leq d_{-1}^{(0)}-1,
\end{align*}
and $v'$ is a linear combination of PBW monomials $x$ such that $x^{(-)}\neq 1$.
Note that the assumption that $m_1\geq 2$ makes sure that (\ref{e34}) holds,
and that $\dep(v^{i}) =\dep(w^i)+1$ for all $i \in J^+$.

On the other hand, by Lemma \ref{lem:Sugawara_singular_vector},
\begin{align*}
L_{-1} w={\tilde{L}_{-1} w},
\end{align*}
since $w$ is a singular vector of $V^k(\g)$.
Hence $v^1$ must be a PBW monomial of {$\tilde{L}_{-1} w$}.
Our strategy to obtain the expected contradiction
is to show that there is no PBW monomial $v^1$ in {$\tilde{L}_{-1} w^i$}
for each $i \in J$.

$\ast$ Assume that $i\in J^+$, and suppose that
$v^{1}$ is a PBW monomial  in ${\tilde{L}_{-1} w^{i}}$.
First of all, $\deg((w^{i})^{(+)}) = d^+$
because $i \in J^+$. Moreover,
 by the definition of $J_1$ and Lemma~\ref{lem:wi_+},
we have $(w^{i})^{(-)}=1$ and $\dep((w^{i})^{(+)})=0$.
Hence by (2) of Lemma~\ref{lem:Sugawara_bracket},
$$\deg((v^1)^{(+)}) < \deg((w^{i})^{(+)}) = d^+$$
because $(v^1)^{(-)}=1$ and $\dep((v^{1})^{(+)})=0$ by \eqref{e14}
and \eqref{e13}.
But $d^+ = \deg((v^1)^{(+)})$ by \eqref{e13},
 whence a contradiction.

$\ast$ Assume that $i \in J_{-1}^{(0)} \setminus J^+$.
By the definition of $J^+$ and \eqref{e13},
\begin{equation}
\label{eq:d+}
\deg ((w^{i})^{(+)}) < d^+ =\deg((v^1)^{(+)}).
\end{equation}
Suppose that
$v^{1}$ is a PBW monomial  in ${\tilde{L}_{-1} w^{i}}$.
Then
\begin{equation}
\label{e15}
(w^{i})^{(-)}=1 = (v^1)^{(-)}
\end{equation}
by Lemma \ref{lem:wi_-} since $i \in J_{-1}^{(0)}$.
The last equality follows from \eqref{e14}.
Then by (c) of Lemma~\ref{lem:Sugawara_monomial},
either $\deg((v^1)^{(0)})=\deg((w^{i})^{(0)})+1$,
or $( v^1)^{(0)}=(w^{i})^{(0)}.$
But it is impossible that $\deg((v^1)^{(0)})=\deg((w^{i})^{(0)})+1$,
by (d) of Lemma \ref{lem:Sugawara_monomial} because $\deg((v^1)^{(+)}) > \deg ((w^{i})^{(+)})$.
Therefore,
$$
(v^1)^{(0)}=(w^{i})^{(0)}.$$
Computing $\tilde{L}_{-1} w^{i}$, we deduce from
$$
(v^1)^{(+)}=e_{\be_1}(-1)^{a^{(1)}_{1,1}}\cdots e_{\be_{q}}(-1)^{a^{(1)}_{q,1}},
$$
that
$$
(w^i)^{(+)}=e_{\be_1}(-1)^{a^{(j)}_{1,1}}\cdots e_{\be_{q}}(-1)^{a^{(j)}_{q,1}}.
$$
{Since $(v^1)^{(-)}=(w^{i})^{(-)}=1$, it
results from Lemma \ref{lem:Sugawara_bracket}
that $\deg((v^1)^{(+)})\leq \deg((w^{i})^{(+)})$,
which contradicts \eqref{eq:d+}.}

$\ast$ Assume that $i\in J_1\setminus J_{-1}^{(0)}$.
Then
 \begin{align}
 \label{eq:d01}
 \deg_{-1}^{(0)} (w^{i})< d_{-1}^{(0)} =\deg_{-1}^{(0)}(v^1)
 \end{align}
 by \eqref{e34}.
 Suppose that  $v^1$ is a PBW monomial  in ${\tilde{L}_{-1} w^{i}}$.
 By (b) and (c) of Lemma~\ref{lem:Sugawara_monomial},
 \begin{equation}\label{e16}
(w^{i})^{(-)}=1, \quad \deg_{-1}^{(0)}(v^1)=\deg_{-1}^{(0)}(w^i)+1,
 \end{equation}
because $(v^1)^{(-)}=1$ by \eqref{e14}.
Remember that
\begin{align}\label{e42}
(v^1)^{(+)}=e_{\be_1}(-1)^{a^{(1)}_{1,1}}\cdots e_{\be_{q}}(-1)^{a^{(1)}_{q,1}}.
\end{align}
Computing $\tilde{L}_{-1} w^{i}$, we deduce that
$$ (w^{i})^{(+)}=e_{\be_1}(-1)^{a^{(i)}_{1,1}}\cdots e_{\be_{q}}(-1)^{a^{(i)}_{q,1}}.
 $$
{Since $v^{(-)}=1$ and $\deg_{-1}^{(0)}(v^1)=\deg_{-1}^{(0)}(w^i)+1$,
it results from Lemma \ref{lem:Sugawara_bracket} (3) that $\dep((v^1)^{(+)})\geq 1$,
which contradicts (\ref{e42})}.

$\ast$ Finally, if $j \in J \setminus J_1$, then by Lemma \ref{lem:Sugawara_monomial} (b), any PBW monomial $y$ in $\tilde{L}_{-1}w^j$ satisfies that $y^{(-)}\neq 1$. So $v^1$
cannot be a PBW monomial in $\tilde{L}_{-1}w^j$.

This concludes the proof of the lemma.
\end{proof}

As already explained, Lemma \ref{lem:w_0} implies that $w$ has zero depth
and so its image in $R_{V^k(\g)}$ is nonzero,
achieving the proof of Theorem \ref{Th:main}.

\subsection{Remarks}\label{subsection:remark}
The statement of Theorem \ref{theorem:image-of-the-singular}
is not true at the critical level.
Also,
it is not true that
the depth of a depth-homogenous singular vector of $S(\g[t^{-1}]t^{-1})$ is
always zero.

Indeed,
the $\g[[t]]$-module
$S(\g[t^{-1}]t^{-1})$
can be naturally identified with $\C[J_{\infty}\g^*]$,
where $J_{\infty}X$ is the arc space of $X$,
and so 
$S(\g[t^{-1}]t^{-1})^{\g[t]}\cong \C[J_{\infty}\g^*]^{J_{\infty}G}$.
It is known \cite{RaiTau92,BeiDri,EisFre01} that
\begin{align*}
\C[J_{\infty}\g^*]^{J_{\infty}G}\cong \C[J_{\infty}(\g^*/\!/G)].
\end{align*}
This means that
the invariant ring is a polynomial ring with infinitely many
variables
$\partial^j p_i$,
$i=1,\dots, \ell$,
$j\geq 0$,
where
$p_1,\dots, p_{\ell}$ is a set of homogeneous generators of
$S(\g)^\g$
considered as elements of
$S(\g[t^{-1}]t^{-1})$ via the embedding
$S(\g)\hookrightarrow S(\g[t^{-1}]t^{-1})$,
$\g\ni x\mapsto x(-1)$.
We have  $\on{depth}(\partial^j p_i)=j$
although
each $\partial^j p_i$ is a singular vector
of $S(\g[t^{-1}]t^{-1})$.

For $k=-h^{\vee}$,
the maximal submodule $N_k$ of $V^k(\g)$
is generated by Feigin-Frenlel center (\cite{FreGai04}).
Hence \cite{FeiFre92,Fre05},
$\on{gr}N_{k}$ 
is exactly the
argumentation ideal of
$S(\g[t^{-1}]t^{-1})^{\g[t]}$.
Therefore,
the above argument shows that
the statement of Theorem \ref{theorem:image-of-the-singular}
is false at the critical level.

\section{$W$-algebras and proof of Theorem \ref{Th:main2}}
\label{sec:W-algebras}
Let $f$ be a nilpotent element of $\g$. By the Jacobson-Morosov theorem, 
it embeds into an $\mathfrak{sl}_2$-triple $(e,h,f)$ of $\g$.
Recall that the Slodowy slice $\mathscr{S}_f$ is the affine space $f+\g^{e}$,
where $\g^{e}$ is the centralizer of $e$ in $\g$.
It has a natural Poisson structure induced from that of $\g^*$ (\cite{GanGin02}).

The embedding ${\rm span}_\C\{e,h,f\}
\cong \mathfrak{sl}_2 \hookrightarrow \g$
exponentiates to a homomorphism
$SL_2 \to G$. By restriction to the one-dimensional
torus consisting of diagonal matrices, we obtain
a one-parameter subgroup $\rho \colon \C^* \to G$.
For $t\in\C^*$ and $x\in\g$, set
\begin{align*} \label{eq:rho}
\tilde{\rho}(t)x := t^{2} \rho(t)(x).
\end{align*}
We have
$\tilde{\rho}(t)f=f$, and
the $\C^*$-action of $\tilde{\rho}$ stabilizes $\mathscr{S}_f$.
Moreover, it is contracting to $f$ on $\mathscr{S}_f$, that is, for all $x\in\g^{e}$,
$$\lim_{t\to 0} \tilde{\rho}(t)(f+x)=f.$$

The following proposition is well-known.
Since its proof is short, we give below
the argument for the convenience of the reader.

\begin{prop}[{\cite{Slo80,Premet02,Charbonnel-Moreau}}]
\label{pro:Slodowy}
The morphism
$$\theta_f \colon G \times \mathscr{S}_f \longrightarrow \g,
\quad (g,x) \longmapsto g.x$$
is smooth onto a dense open subset of $\g^*$.
\end{prop}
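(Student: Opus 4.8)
The plan is to show that the differential of $\theta_f$ is surjective at every point of $G\times\mathscr{S}_f$; since both source and target are smooth irreducible varieties, this makes $\theta_f$ a smooth (in particular open) morphism, and its image is then a nonempty open — hence dense — subset of $\g\cong\g^{*}$.

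First I would recall the $\mathfrak{sl}_2$-theoretic decomposition $\g=[f,\g]\oplus\g^{e}$. Regarding $\g$ as a module over $\on{span}_\C\{e,h,f\}\cong\mathfrak{sl}_2$ and splitting it into irreducible summands, the lowest weight lines of the summands span $\ker(\on{ad}f)=\g^{f}$, the highest weight lines span $\g^{e}$, and $[f,\g]=\im(\on{ad}f)$ is the sum, over all summands, of all weight spaces other than the highest one. Comparing weights inside each summand gives $[f,\g]\cap\g^{e}=0$, and $\dim[f,\g]=\dim\g-\dim\g^{f}=\dim\g-\dim\g^{e}$ then gives the direct sum.

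Next I would compute the differential. Identifying $T_{(g,x)}(G\times\mathscr{S}_f)$ with $\g\oplus\g^{e}$ for $x\in\mathscr{S}_f$, one finds $d\theta_f|_{(g,x)}(\xi,v)=\on{Ad}(g)\big([\xi,x]+v\big)$, so the image of $d\theta_f|_{(g,x)}$ is $\on{Ad}(g)\big([\g,x]+\g^{e}\big)$, and $\theta_f$ is submersive at $(g,x)$ precisely when $[\g,x]+\g^{e}=\g$. For $x=f$ this is exactly the decomposition above. To get it for all $x\in\mathscr{S}_f$, consider the closed subset $Z=\{x\in\mathscr{S}_f:[\g,x]+\g^{e}\subsetneq\g\}$, which does not contain $f$. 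Using that $\tilde\rho(t)$ is a linear automorphism of $\g$ with $\tilde\rho(t)\g=\g$, $\tilde\rho(t)[\g,x]=[\g,\tilde\rho(t)x]$, and $\tilde\rho(t)\g^{e}=\g^{e}$ (because $\on{Ad}\rho(t)$ preserves $\g^{e}$, as it only rescales $e$), one sees that $Z$ is stable under the $\C^{*}$-action $\tilde\rho$. Since $\tilde\rho$ contracts $\mathscr{S}_f$ onto $f$, any point $x\in Z$ would give $f=\lim_{t\to0}\tilde\rho(t)x\in Z$, a contradiction; hence $Z=\varnothing$.

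It then follows that $\theta_f$ is a morphism of smooth varieties with everywhere surjective differential, so it is a smooth morphism; its image $U$ is open, and being a nonempty open subset of the irreducible variety $\g^{*}$ it is dense, while $\theta_f$ restricts to a smooth surjection $G\times\mathscr{S}_f\to U$. The step I expect to be the real content is the surjectivity of $d\theta_f$ away from the base point $f$: rather than analyzing $[\g,x]$ for a general $x$ in the slice, the contracting $\C^{*}$-action lets us propagate this rank statement from $f$ — where it is pure $\mathfrak{sl}_2$ representation theory — to all of $\mathscr{S}_f$; the remaining assertions are the standard ``submersion $\Rightarrow$ smooth morphism'' principle together with irreducibility of $\g$.
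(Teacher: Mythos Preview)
Your proof is correct and follows essentially the same route as the paper's: establish the submersion at $(1_G,f)$ via $\g=[f,\g]\oplus\g^{e}$, then propagate to all of $G\times\mathscr{S}_f$ using $G$-equivariance together with the contracting $\C^*$-action $\tilde\rho$ on the slice, and conclude smoothness onto a dense open via the standard submersion criterion. Your treatment is in fact more explicit than the paper's (you spell out the $\mathfrak{sl}_2$-decomposition, the differential, and the $\tilde\rho$-stability of the non-submersive locus), but the argument is the same.
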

\begin{proof}
Since $\g =\g^{e}+[f,\g]$, the map $\theta_f$ is a submersion at
$(1_{G},f)$. Therefore, $\theta _{f}$ is a submersion at all points of
$G\times (f +\g^{e})$ because it is $G$-equivariant for the left
multiplication in $G$, and
$$ \lim _{t \to \infty }  \rho(t) .x = f$$
for all $x$ in $f+\g^{e}$. So, by~\cite[Ch.\,III, Proposition 10.4]{Hartshorne},
the map $\theta _{f}$ is a smooth morphism onto a dense open subset of
$\g$, containing $G.f$.
\end{proof}

As in the introduction, let $\W^k(\g,f)$ be the {affine $W$-algebra} associated with
a nilpotent element $f$ of $\g$
defined by the generalized quantized Drinfeld-Sokolov reduction:
$$\W^k(\g,f)=H^{0}_{DS,f}(V^k(\g)).$$
Here, $H^{\bullet}_{DS,f}(M)$ denotes the BRST
cohomology of the  generalized quantized Drinfeld-Sokolov reduction
associated with $f \in \mathcal{N}(\g)$ with coefficients in
a $V^k(\g)$-module $M$.
Recall that we have \cite{DSK06,Ara09b} a natural isomorphism
$R_{\W^k(\g,f)}\cong \C[\mathscr{S}_{f}]$ of Poisson algebras, so that
\begin{align*}
 X_{\W^k(\g,f)}= \mathscr{S}_{f}.
\end{align*}
We write $\W_k(\g,f)$ for the unique simple (graded) quotient of
$\W^k(\g,f)$. Then $X_{\W_k(\g,f)}$
 is a $\C^*$-invariant Poisson
subvariety of the Slodowy slice $\mathscr{S}_f$.

Let $\mathscr{O}_k$ be the category $\mathscr{O}$ of
$\fg$ at level $k$.
We have a functor
\begin{align*}
 \mathscr{O}_k\longrightarrow\W^k(\g,f)\on{-Mod}
 ,\quad M\longmapsto
 H^0_{DS,f}(M),
\end{align*}
where
$\W^k(\g,f)\on{-Mod}$ denotes the category
of $\W^k(\g,f)$-modules.

The full subcategory of $\mathscr{O}_k$ consisting of
objects $M$ on which $\g$ acts  locally finitely will be denoted by $\on{KL}_k$.
Note that both $V^k(\g)$ and $L_k(\g)$ are objects of $\on{KL}_k$.
 \begin{theorem}[{\cite{Ara09b}}]
 \label{Th:W-algebra-variety}
 \begin{enumerate}
  \item $H_{DS,f}^i(M)=0$ for all $i\ne 0$, $M\in
	\on{KL}_k$.
	In particular, the functor
	$\on{KL}_k\longrightarrow\W^k(\g,f)\on{-Mod}$, $M\mapsto
	H_{DS,f}^{0}(M)$, is exact.
\item
     For any quotient $V$ of $V^k(\g)$,
    \begin{align*}
X_{H^{0}_{DS,f}(V)}=X_{V}\cap  \mathscr{S}_{f}.
\end{align*}
In particular
$H_{DS,f}^{0}(V)
	  \ne 0$ if and only if
	  $\overline{G.f}\subset X_V$.
  \label{item:intersection}
    \end{enumerate}
\end{theorem}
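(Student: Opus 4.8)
The plan is to work throughout with the BRST complex $(C_{DS,f}^\bullet(M),Q)$ that computes $H^\bullet_{DS,f}(M)$. I would fix at the outset an even good grading $\g=\bigoplus_j\g_j$ for the $\mathfrak{sl}_2$-triple $(e,h,f)$ (such a grading always exists), so that $\mathfrak{m}:=\bigoplus_{j\geq 1}\g_j$ is a nilpotent subalgebra on which $\chi:=(f|\,\cdot\,)$ restricts to a character, with unipotent group $\exp(\mathfrak{m})\subset G$, and no neutral fermions are needed. Then $C_{DS,f}^\bullet(M)=M\otimes F^{\mathrm{ch}}$, where $F^{\mathrm{ch}}$ is the charged free-fermion space attached to $\mathfrak{m}$, and $Q=Q_{\mathrm{st}}+Q_\chi$ is the semi-infinite Chevalley--Eilenberg differential of the loop algebra $\mathfrak{m}[t,t^{-1}]$, twisted by $\chi$. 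Part (1) concerns the cohomology of this complex for $M\in\on{KL}_k$; part (2) concerns the behaviour of the symbol functor $\on{gr}^F$ under $H^0_{DS,f}$.

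For (1), I would filter $C_{DS,f}^\bullet(M)$ by the PBW filtration $G_\bullet M$ together with the tautological filtration of $F^{\mathrm{ch}}$; equivalently, split $Q=Q_{(0)}+Q_{(1)}$ with respect to an auxiliary charge grading so that $Q_{(0)}$ carries no $M$-to-$M$ term. Up to completion, the associated graded complex then factors as a tensor product of two complexes, each acyclic outside cohomological degree $0$: a Koszul-type complex built from the positive modes $t\mathfrak{m}[t]$ acting on $M$ together with the matching ghosts --- acyclic in positive degree because $M\in\on{KL}_k$ is a smooth module on which $\g$, hence the nilpotent $\mathfrak{m}$, acts locally finitely and $t\mathfrak{m}[t]$ acts locally nilpotently, while the $\chi$-twist turns the $\mathfrak{m}$-directions into a Koszul complex on a regular sequence à la Kostant --- and the finite-dimensional Chevalley--Eilenberg complex of $\mathfrak{m}$ with coefficients in a module built out of $\on{gr}^G M$, whose higher cohomology again vanishes by Kostant's theorem for the $\chi$-twisted cohomology of $\mathfrak{m}$. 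A convergence argument for the resulting spectral sequence gives $H^i_{DS,f}(M)=0$ for $i\ne 0$, and exactness of $M\mapsto H^0_{DS,f}(M)$ on $\on{KL}_k$ then follows at once from the long exact cohomology sequence of a short exact sequence in $\on{KL}_k$.

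For (2), write $V=V^k(\g)/N$. By (1) the sequence $0\to H^0_{DS,f}(N)\to\W^k(\g,f)\to H^0_{DS,f}(V)\to 0$ is exact, and since $W\mapsto R_W$ is a right-exact quotient functor we obtain $R_{H^0_{DS,f}(V)}\cong R_{\W^k(\g,f)}/J$, where $J$ is the image of $H^0_{DS,f}(N)$ in $R_{\W^k(\g,f)}\cong\C[\mathscr{S}_f]$. The crucial input is that the principal symbol map intertwines the quantum BRST differential with the classical BRST (Hamiltonian) reduction: taking $\on{gr}^F$ and then the depth-zero part, $R_{H^0_{DS,f}(V)}$ computes, as a reduced scheme (i.e.\ up to nilpotents), the classical reduction of $R_V=\C[X_V]$ along the moment map $\mu\colon X_V\hookrightarrow\g^*\to\mathfrak{m}^*$ at the value $\chi$. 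Since $X_V$ is $G$-, hence $\exp(\mathfrak{m})$-, invariant, and since the Gan--Ginzburg isomorphism $\mu^{-1}(\chi)\cong\exp(\mathfrak{m})\times\mathscr{S}_f$ (whose $V=V^k(\g)$ shadow is exactly the recalled identity $R_{\W^k(\g,f)}\cong\C[\mathscr{S}_f]$) splits $\mu^{-1}(\chi)\cap X_V$ as $\exp(\mathfrak{m})\times(X_V\cap\mathscr{S}_f)$, this classical reduction is $\C[X_V\cap\mathscr{S}_f]$; equivalently, $J$ and the restriction to $\mathscr{S}_f$ of the defining ideal $I_N$ of $X_V$ have the same radical, so $X_{H^0_{DS,f}(V)}=X_V\cap\mathscr{S}_f$. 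For the final equivalence: $X_V$ is closed, $G$-invariant and conic, hence stable under the contracting action $\tilde\rho(t)=t^2\rho(t)$, and since $\lim_{t\to 0}\tilde\rho(t)(f+x)=f$ for $x\in\g^e$, any point of $X_V\cap\mathscr{S}_f$ forces $f\in X_V$, i.e.\ $\overline{G.f}\subset X_V$, while the converse is trivial because $f\in\mathscr{S}_f$. Combined with the standard fact that a nonzero graded quotient $W$ of $\W^k(\g,f)$ --- which has $\W^k(\g,f)_0=\C\mathbf{1}$ and conformal grading bounded below --- satisfies $\bar{\mathbf{1}}\ne 0$ in $R_W$, hence $X_W\ne\emptyset$, this yields $H^0_{DS,f}(V)\ne 0\iff\overline{G.f}\subset X_V$.

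The main obstacle is precisely the compatibility invoked in (2): one must show that $\on{gr}^F H^0_{DS,f}(V)$ agrees, up to nilpotents, with the classical BRST reduction of $\on{gr}^F V$, which requires controlling the spectral sequence comparing the cohomology of the classical BRST complex of $\on{gr}^F V$ with $\on{gr}^F H^\bullet_{DS,f}(V)$ --- one needs its degeneration to the extent that no higher BRST cohomology and no ``excess'' symbols survive, so that an a priori surjection of graded rings becomes an equality of reduced schemes. This is exactly the step that re-uses, now uniformly over all of $\on{KL}_k$ rather than just for $V^k(\g)$ and generalized Verma modules, the Kostant-type acyclicity established in (1); once it is in hand, the homological bookkeeping of (1), the right-exactness argument of (2), and the contracting-$\C^*$ computation are all formal.
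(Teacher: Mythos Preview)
The paper does not prove this theorem: it is stated with the citation \cite{Ara09b} and used as a black box in the proof of Theorem~\ref{Th:main2}. There is therefore no proof in the present paper to compare your proposal against.

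That said, your outline is a faithful sketch of the argument in the cited reference. For (1), filtering the BRST complex so that the associated graded splits into a Koszul piece (handled by local nilpotency of positive modes on objects of $\on{KL}_k$) and a finite Chevalley--Eilenberg piece (handled by Kostant's lemma on the $\chi$-twisted cohomology of $\mathfrak{m}$), followed by a spectral-sequence convergence argument, is exactly the strategy of \cite{Ara09b}. For (2), passing to the classical Hamiltonian reduction via $\on{gr}^F$, invoking the Gan--Ginzburg isomorphism $\mu^{-1}(\chi)\cong \exp(\mathfrak{m})\times\mathscr{S}_f$, and using the contracting $\C^*$-action $\tilde\rho$ on $\mathscr{S}_f$ for the last equivalence is again the approach taken there. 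You have also correctly isolated the genuine technical point: the compatibility of $\on{gr}^F$ with $H^0_{DS,f}$ up to nilpotents, i.e.\ the degeneration of the spectral sequence comparing the cohomology of the classical BRST complex of $\on{gr}^F V$ with $\on{gr}^F H^\bullet_{DS,f}(V)$, is the step that requires re-running the acyclicity of (1) at the level of symbols.

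One small caution: your appeal to ``the standard fact that a nonzero graded quotient $W$ of $\W^k(\g,f)$ \dots\ has $\W^k(\g,f)_0=\C\mathbf{1}$ and conformal grading bounded below'' is not quite literally true for arbitrary $f$ (the conformal weights need not lie in $\Z_{\geq 0}$), but the conclusion $X_W\ne\emptyset$ for $W\ne 0$ still holds because the vacuum survives in $R_W$; the argument in \cite{Ara09b} handles this via the Li filtration directly rather than via the conformal grading.
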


By Theorem \ref{Th:W-algebra-variety} (1),
$H^{0}_{DS,f}(L_k(\g))$ is a quotient vertex algebra of $\W^k(\g,f)$ if it is nonzero.
Conjecturally \cite{KacRoaWak03,KacWak08},
we have
\begin{align*}
\W_k(\g,f)\cong H^{0}_{DS,f}(L_k(\g))\text{ provided that }H^{0}_{DS,f}(L_k(\g))\ne 0.
\end{align*}
(This conjecture has been verified in many cases \cite{Ara05,Ara07,Ara08-a,AEkeren19}.)

\begin{proof}[Proof of Theorem \ref{Th:main2}]
The directions (1) $\Rightarrow$ (2)
and  (2) $\Rightarrow$ (3) are obvious.
Let us show that (3) implies (1).
So suppose that  $X_{H^{0}_{DS,f}(L_k(\g))}=\mathscr{S}_f$.
By Theorem~\ref{MainTheorem}, it is enough to show that $X_{L_k(\g)}=\g^*$.
Assume the contrary. Then $X_{L_k(\g)}$ is contained in a proper
$G$-invariant closed subset of $\g$.
On the other hand, by Theorem \ref{Th:W-algebra-variety} and our hypothesis,
we have
$$ \mathscr{S}_f=X_{H^{0}_{DS,f}(L_k(\g))}= X_{L_k(\g)} \cap \mathscr{S}_f.$$
Hence, $ \mathscr{S}_f$ must be  contained
in a proper
$G$-invariant closed subset of $\g$. But this contradicts Proposition \ref{pro:Slodowy}.
The proof of the theorem is completed.
\end{proof}

\bibliographystyle{alpha}

\end{document}